\newtheorem{df}{Definition}[section]
\newtheorem{thm}[df]{Theorem}
\newtheorem{question}[df]{Question}
\newtheorem*{Question*}{Question}
\newtheorem{prop}[df]{Proposition}
\newtheorem{rem}[df]{Remark}
\newtheorem{lem}[df]{Lemma}
\title{on the uniqueness of vortex equations and its geometric applications}
\author{Qiongling Li}
\address{
Centre for Quantum Geometry of Moduli Spaces (QGM)\\
Aarhus University\\
Ny Munkegade 118\\
8000 Aarhus C, Denmark}
\address{
Department of Mathematics\\
California Institute of Technology\\
1200 East California Boulevard\\
 Pasadena, US}
\email{qiongling.li@gmail.com}
\begin{document}

\begin{abstract}
We study the uniqueness of a vortex equation involving an entire function on the complex plane. As geometric applications, we show that there is a unique harmonic map $u:\mathbb{C}\rightarrow \mathbb{H}^2$ satisfying $\partial u\neq 0$ with prescribed polynomial Hopf differential; there is a unique affine spherical immersion $u:\mathbb{C}\rightarrow \mathbb{R}^3$ with prescribed polynomial Pick differential. We also show that the uniqueness fails for non-polynomial entire functions with finite zeros. 
\end{abstract}
\maketitle
\section{Introduction}
Let $\phi=\phi(z)$ be a nonzero entire function on $\mathbb{C}$ and $k\geq 2$ be an integer. We consider the following vortex equation on the complex plane
\begin{equation}\label{main}\triangle w=e^{w}-|\phi|^2 e^{-(k-1)w},\end{equation} where $\triangle=4\partial_z\partial_{\bar z}$.
In the case $k=2$, it is induced from the harmonic map equation from $\mathbb{C}$ to $\mathbb{H}^2$, which is extensively studied, see \cite{SchoenYau, WAN, Parabolic,HAN, HTTW}.  In the case $k=3$, it is Wang's equation for affine spherical immersions, see \cite{Wang, SimonWang,DumasWolf}. For general $k$, such equations are first considered in \cite{DumasWolf} and later generalized in \cite{DaiLi}. 

There is a geometric interpretation for Equation (\ref{main}). Let $\sigma=e^{w}|dz|^2$ be a conformal metric on $\mathbb{C}$. Equation (\ref{main})  becomes
\begin{equation*}2K_{\sigma}=-1+|\phi|_{\sigma}^2,
\end{equation*}
where $K_{\sigma}=-\frac{1}{2\sigma}\triangle\log \sigma$ is the curvature of $\sigma$. Roughly speaking, twice of the curvature of $\sigma$ differs from the curvature of the hyperbolic metric by the norm of $\phi dz^k$ with respect to the metric $\sigma$.

The first main theorem we show is 
\begin{thm}\label{mainmain1}
Let $\phi$ be a nonzero polynomial function on $\mathbb{C}$, there is a unique $C^{\infty}$ solution $w$ on $\mathbb{C}$ to Equation (\ref{main}). 
\end{thm}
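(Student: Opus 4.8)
The plan is to treat uniqueness as the heart of the matter and to obtain existence from the same a priori estimates by an exhaustion argument. For uniqueness, suppose $w_1,w_2$ are two smooth solutions of (\ref{main}) and set $v=w_1-w_2$. Subtracting the two equations and applying the mean value theorem to the strictly increasing nonlinearity $f(w)=e^{w}-|\phi|^2e^{-(k-1)w}$, I would write
\begin{equation*}
\triangle v = c(z)\,v,\qquad c(z)=e^{\theta_1(z)}+(k-1)|\phi(z)|^2e^{-(k-1)\theta_2(z)}\ge e^{\theta_1(z)}>0,
\end{equation*}
where $\theta_1(z),\theta_2(z)$ lie between $w_1(z)$ and $w_2(z)$. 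Thus $v$ solves a linear equation with a strictly positive zeroth-order coefficient, and the whole problem reduces to a maximum-principle statement on the noncompact surface $\mathbb{C}$.

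To run a maximum principle I need two a priori facts about arbitrary smooth solutions: a uniform lower bound $w\ge -C$, and a two-sided comparison $|w-w_0|\le C$ with a fixed reference $w_0$, so that $v$ is bounded. Both should follow from barrier functions built on the natural scale $w_0=\tfrac{2}{k}\log(1+|\phi|^2)$. Heuristically, at an interior minimum of $w$ one has $\triangle w\ge 0$, hence $e^{kw}\ge|\phi|^2$ and $w\ge\tfrac{2}{k}\log|\phi|$, which is bounded below once $|\phi|\ge 1$; since the polynomial $\phi$ has only finitely many zeros, $|\phi|\ge 1$ outside a large disk $D_R$, while continuity controls $w$ on $\overline{D_R}$. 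The upper bound and the boundedness of $v$ come from comparing $w$ with super- and subsolutions of the form $w_0\pm C$, using that $\phi$ is a polynomial so that $|\phi|^2$ has controlled polynomial growth and $e^{w_0}|dz|^2$ is a complete, cone-type metric near infinity. This is exactly the step where the polynomial hypothesis is essential: it is what pins the growth of every solution to that of $\tfrac{2}{k}\log(1+|\phi|^2)$ and forces two solutions to differ by a bounded amount, whereas for non-polynomial $\phi$ this breaks down, consistent with the stated nonuniqueness.

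Granting these estimates, the lower bound $w_i\ge -C$ yields $c(z)\ge e^{-C}=:\delta>0$ uniformly on $\mathbb{C}$, and $v$ is bounded. I would then invoke the Omori--Yau maximum principle on $(\mathbb{C},|dz|^2)$, which is complete with vanishing Ricci curvature: there is a sequence $p_n$ with $v(p_n)\to\sup v$ and $\triangle v(p_n)<1/n$. If $\sup v>0$, then for large $n$ one has $c(p_n)\,v(p_n)\ge\delta\,v(p_n)\ge\tfrac{\delta}{2}\sup v>0$, contradicting the identity $\triangle v(p_n)=c(p_n)\,v(p_n)<1/n\to 0$; hence $\sup v\le 0$. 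Applying the same argument to $-v$, which satisfies $\triangle(-v)=c\,(-v)$ with the same positive coefficient $c$, gives $\inf v\ge 0$, so $v\equiv 0$ and uniqueness follows.

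For existence I would reuse the barriers: on each disk $D_R$ solve the Dirichlet problem for (\ref{main}) with boundary data $w_0$ by the monotone sub/supersolution method between $w_0-C$ and $w_0+C$, obtain uniform interior $C^{2,\alpha}$ bounds from the Schauder theory, and extract a locally convergent subsequence as $R\to\infty$ to produce a smooth solution on $\mathbb{C}$ lying in the same barrier class (alternatively, existence may be cited from \cite{DumasWolf, DaiLi}). The main obstacle throughout is the a priori control at infinity: constructing barriers that simultaneously accommodate the degeneration of $|\phi|^2$ at its zeros, where (\ref{main}) degenerates to the Liouville equation $\triangle w=e^{w}$, and its polynomial growth at infinity, and then verifying that these force every solution into a single bounded neighborhood of $w_0$.
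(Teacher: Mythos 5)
Your overall architecture is the right one --- a lower bound on any solution coming from the polynomial growth of $\phi$, followed by a maximum-principle comparison of two solutions via Omori--Yau --- and you correctly identify the lower bound as the place where the polynomial hypothesis enters. But the two a priori estimates on which everything rests are exactly the content of the theorem, and your proposal does not actually establish either of them. The "interior minimum" heuristic for $w\ge\tfrac2k\log|\phi|$ fails on $\mathbb{C}$ because a bounded-below function on a noncompact surface need not attain its infimum; and comparing $w$ with barriers $w_0\pm C$ on all of $\mathbb{C}$ itself requires a maximum principle valid at infinity, which is the very thing being sought --- the argument is circular as written. The paper breaks this circle with the Cheng--Yau maximum principle: since $|\phi|\ge1$ outside a disk, one gets $\triangle(-w)\ge f(-w)$ for a function $f$ growing like $e^{(k-1)t}$, and Cheng--Yau converts this differential inequality into the bound $\sup(-w)<\infty$ with \emph{no} a priori boundedness assumption. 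That single estimate makes $e^w|dz|^2$ complete, after which uniqueness is delegated to Theorem \ref{CompleteSolution}.

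The second, independent gap is the linearization $\triangle v=c(z)v$. Even granting the lower bound so that $c\ge\delta>0$, the resulting inequality is only \emph{linear} in $v$ on $\{v>0\}$, and the Keller--Osserman integral condition (iii) of Cheng--Yau fails for $g(t)=\delta t$ (the integral $\int\bigl(\int_b^t g\bigr)^{-1/2}dt$ diverges); so you cannot conclude that $v$ is bounded above, and without that bound Omori--Yau cannot be started. The paper keeps the nonlinearity intact, writing $\triangle_g\eta\ge(e^\eta-1)-|\phi|^2e^{-kw}(e^{-(k-1)\eta}-1)$ in the background metric $g=e^w|dz|^2$ of the complete solution, and --- crucially --- first proves the pointwise estimate $|\phi|^2e^{-kw}\le1$ (via the auxiliary function $\log(1+|\phi|^2e^{-kw})$ and yet another Cheng--Yau argument in the proof of Theorem \ref{CompleteSolution}). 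That estimate is what makes the right-hand side $\ge(e^\eta-1)-e^{-(k-1)\eta}$, uniformly superlinear at $+\infty$, so that $\eta$ is bounded above and Omori--Yau then yields $\sup\eta\le0$ (this is Lemma \ref{weakunique}). The same lemma is also the only available route to your claimed upper bound $w\le w_0+C$, which you assert but do not prove. To repair the proposal you would need to (i) replace the interior-minimum/barrier heuristics by the Cheng--Yau argument for the lower bound, and (ii) abandon the linearization in favor of the exponential differential inequality for $\eta$, together with the $|\phi|^2e^{-kw}\le1$ estimate that makes it usable.
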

As an application of the case $k=3$, the above theorem gives a negative answer to the question asked by Dumas and Wolf on page 1765 in \cite{DumasWolf}, which is one of the original motivations of our paper.
\begin{question}(Dumas-Wolf) \label{DWquestion}Does there exist an affine spherical immersion $f:\mathbb{C}\rightarrow \mathbb{R}^3$ with polynomial Pick differential whose Blaschke metric has positive curvature at some point?
\end{question}
For a nonzero entire function $\phi$, we first show the following proposition. 


\begin{prop}\label{main1}
Let $\phi$ be a nonzero entire function on $\mathbb{C}$, there is a unique $C^{\infty}$ solution $w$ on $\mathbb{C}$ to Equation (\ref{main}) such that $e^w|dz|^2$ is complete.
\end{prop}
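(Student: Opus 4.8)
The plan is to establish the two halves, existence and uniqueness, separately, and to exploit throughout the geometric reformulation given in the introduction: any solution $w$ makes $\sigma=e^{w}|dz|^2$ a conformal metric with $2K_\sigma=-1+|\phi|^2_\sigma$, and since $|\phi|^2_\sigma\ge 0$ this forces the a priori curvature bound $K_\sigma\ge -\tfrac12$ for every solution. This lower curvature bound is the single most useful fact, because on a complete surface it gives Ricci curvature bounded below and thereby unlocks the generalized maximum principle of Omori--Yau.

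For existence I would exhaust $\mathbb{C}$ by disks $D_R$ and solve a boundary-blow-up problem on each. On $D_R$ one constructs, by the sub/supersolution method, the solution $w_R$ of Equation~(\ref{main}) with $w_R\to+\infty$ on $\partial D_R$: a subsolution is furnished by comparison with a small constant modified near the finitely many issues caused by $\phi$, while a supersolution blowing up at the boundary is modeled on the Poincar\'e metric of $D_R$ (which has $K\equiv -\tfrac12$ and is the $\phi\equiv 0$ model). A comparison argument, using that the right-hand side $e^{w}-|\phi|^2e^{-(k-1)w}$ is strictly increasing in $w$, shows that $R\mapsto w_R$ is monotone and that the $w_R$ dominate every global solution; standard interior elliptic estimates then give convergence in $C^\infty_{\mathrm{loc}}$ to a solution $w_\infty$ on $\mathbb{C}$. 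Completeness of $e^{w_\infty}|dz|^2$ I would obtain by comparing $w_\infty$ from below with the complete Poincar\'e-type metrics on the $D_R$ and passing to the limit, so that $e^{w_\infty}|dz|^2$ dominates a complete metric and is itself complete.

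For uniqueness the organizing statement I would prove is the following comparison lemma: \emph{if $w_2$ is any solution with $e^{w_2}|dz|^2$ complete, then every solution $w_1$ satisfies $w_1\le w_2$.} Granting this, uniqueness among complete solutions is immediate, since applying the lemma to two complete solutions with the roles of $w_1,w_2$ interchanged yields $w_1\le w_2$ and $w_2\le w_1$. To prove the lemma set $v=w_1-w_2$ and work on the complete surface $\sigma_2=e^{w_2}|dz|^2$, whose Laplacian is $\triangle_{\sigma_2}=e^{-w_2}\triangle$. On the set $\{v>0\}$, dropping the nonnegative $|\phi|^2$ contribution and using convexity of $t\mapsto e^{t}$ gives
\begin{equation*}
\triangle v \;\ge\; e^{w_1}-e^{w_2}\;\ge\; e^{w_2}\,(w_1-w_2),
\end{equation*}
hence $\triangle_{\sigma_2}v\ge v$ wherever $v>0$. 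If $\sup v>0$ is finite, the bound $K_{\sigma_2}\ge-\tfrac12$ lets me invoke the Omori--Yau maximum principle to produce points $p_j$ with $v(p_j)\to\sup v$ and $\triangle_{\sigma_2}v(p_j)\le 1/j$, contradicting $\triangle_{\sigma_2}v(p_j)\ge v(p_j)\to\sup v>0$.

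The main obstacle is exactly the behavior at infinity, and it appears in both halves. In existence it is the verification that the monotone limit $w_\infty$ remains complete rather than degenerating at infinity, which is where the Poincar\'e comparison must be set up with care. In uniqueness it is the possibility that $\sup v=+\infty$, where the plain Omori--Yau principle does not apply; here completeness of $\sigma_2$ is used essentially, through a Cheng--Yau style barrier built from the distance function $\rho$ of $\sigma_2$ (whose Laplacian is controlled by the curvature bound $K_{\sigma_2}\ge -\tfrac12$), to rule out a positive supremum attained only in the limit at infinity. I expect the construction and estimation of this barrier, together with the completeness of the limit in the existence step, to be the technical heart of the argument; the remaining comparison and convergence steps are routine applications of the maximum principle and interior elliptic regularity.
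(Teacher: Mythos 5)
Your overall architecture (sub/supersolutions for existence, Omori--Yau plus a Cheng--Yau-type boundedness step for uniqueness, both powered by the a priori bound $K_\sigma\ge-\tfrac12$) matches the paper's, but two of your key steps do not go through as written.

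In the uniqueness half, the inequality you retain, $\triangle_{\sigma_2}v\ge v$ on $\{v>0\}$, is too weak to rule out $\sup v=+\infty$. Any Cheng--Yau-style barrier needs the Keller--Osserman superlinearity of condition (iii), which fails for $g(t)=t$ since $\int^\infty\bigl(\int_b^t\tau\,d\tau\bigr)^{-1/2}dt=\infty$; concretely, $u(x,y)=e^{x}$ is an unbounded solution of $\triangle u=u$ on the complete flat plane (which has $K\ge-\tfrac12$), so no distance-function barrier can extract boundedness from the linearized inequality alone. Your convexity step $e^{w_1}-e^{w_2}\ge e^{w_2}v$ discards exactly the exponential growth that makes the argument work. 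The repair is cheap: on $\{v>0\}$ you already have $\triangle_{\sigma_2}v\ge e^{v}-1$, and applying Cheng--Yau to $\max(v,0)$ with $f(t)=e^{t}-1$, $g(t)=e^{t}$ gives boundedness; this would even streamline the paper's route, which instead first proves that every complete solution satisfies $|\phi|^2e^{-kw}\le1$ (via a separate Cheng--Yau argument for $\log(|\phi|^2e^{-kw}+1)$) precisely so as to obtain a differential inequality valid on all of $\mathbb{C}$. Once boundedness is secured, your Omori--Yau step and the role-swapping conclusion are fine.

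In the existence half, completeness of the limit does not follow from ``comparing with the complete Poincar\'e-type metrics on the $D_R$ and passing to the limit'': the Poincar\'e metric of $D_R$ normalized to curvature $-\tfrac12$ is $\frac{8R^2}{(R^2-|z|^2)^2}|dz|^2$, which tends to $0$ locally uniformly as $R\to\infty$ (by Ahlfors--Schwarz, $\mathbb{C}$ carries no complete conformal metric of curvature $\le-\varepsilon<0$), so the limiting lower bound is the zero metric and gives nothing. Nor does the surviving lower bound $w_\infty\ge\frac2k\log|\phi|$ suffice: Lemma \ref{Another} and Theorem \ref{NonUnique} produce, for non-polynomial $\phi$, solutions trapped between $\frac2k\log|\phi|$ and $\frac2k\log|\phi|+c$ that are incomplete, so a bound of this shape cannot by itself yield completeness. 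One needs a lower bound near infinity by a \emph{fixed} complete object; the paper takes the solution $u_2$ of Proposition \ref{hyperbolicdomain} on the fixed hyperbolic exterior domain $\{|z|>R_2\}$ (which dominates that domain's complete hyperbolic metric), builds the global subsolution $\max\{\frac2k\log|\phi|,\,u_2-c\}$, and reads off completeness of the resulting solution from $w\ge u_2-c$ outside a compact set.
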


So Theorem \ref{mainmain1} shows a stronger uniqueness result when $\phi$ is a polynomial. The proof of Proposition \ref{main1} is adapted on previous work of \cite{WAN,Parabolic,HAN,HTTW, DumasWolf} and uses techniques in \cite{BenoistHulin, Calabi, LiLiSimon} of the case $k=3$ where they show the completeness of the Blaschke metric implies that it is negatively curved.

We then study the case where $\phi$ is a non-polynomial entire function with finite zeros.  Any such entire function is of the form $\phi(z)=P(z)e^{Q(z)}$, where $P(z)$ is a polynomial and $Q(z)$ is any nonconstant entire function. In this case, $\phi$ has an essential singularity at infinity. For such $\phi$, the uniqueness of the solution fails. The second main theorem is
\begin{thm}\label{mainmainmain1}Suppose $\phi$ is a non-polynomial entire function with finite zeros, then Equation (\ref{main}) has at least two $C^{\infty}$ solutions $w_1>w_2$ where $e^{w_1}|dz|^2$ is a complete metric on $\mathbb{C}$ while $e^{w_2}|dz|^2$ is an incomplete metric on $\mathbb{C}$. 
\end{thm}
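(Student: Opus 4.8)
The plan is to keep the complete solution $w_1$ furnished by Proposition \ref{main1} and to manufacture a second, strictly smaller, incomplete solution $w_2$ by the method of sub- and supersolutions, organized around the explicit model function
\[
 s:=\tfrac{2}{k}\log|\phi|.
\]
Writing $F(w):=e^{w}-|\phi|^2e^{-(k-1)w}$, a direct computation shows that on the complement of the (finite) zero set $Z(\phi)$ one has $\triangle s=0$ and $e^{s}=|\phi|^{2/k}=|\phi|^2e^{-(k-1)s}$, so that $s$ solves \eqref{main} exactly away from $Z(\phi)$; across $Z(\phi)$ the function $\log|\phi|$ contributes positive Dirac masses, so $s$ is a global subsolution with $s\to-\infty$ at each zero, and its metric is the flat metric with cone points $e^{s}|dz|^2=|\phi|^{2/k}|dz|^2$. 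Note also that $F$ is strictly increasing and vanishes exactly at $w=s$.

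First I would prove that this model metric is incomplete at infinity. Writing $\phi=Pe^{Q}$ with $Q$ nonconstant, $|\phi|^{2/k}|dz|^2$ is, away from the finitely many cone points, a flat metric whose developing map is locally $\Phi=\int\phi^{1/k}\,dz$, with $\Phi'=\phi^{1/k}=P^{1/k}e^{Q/k}$ zero-free near infinity. If the end $\{|z|>R_0\}$ were complete in this metric then, being a complete flat end, it would be modelled on a standard cylindrical or conical flat end and $\Phi'$ would have at most polynomial growth or decay; this contradicts the essential singularity of $e^{Q/k}$ at infinity. Hence the model metric is incomplete, and there is a divergent path $\gamma$ with $\int_\gamma e^{s/2}\,|dz|<\infty$. \emph{This is the main obstacle}: turning the heuristic that $\min_{|z|=r}\operatorname{Re}Q\to-\infty$ (forced since the circular mean of $\operatorname{Re}Q$ is constant while its maximum tends to $+\infty$) into a genuine finite-length escaping path is delicate when $Q$ is transcendental, and I expect most of the effort to go into the rigidity/classification of complete flat ends, or equivalently into the explicit construction and length estimate of $\gamma$.

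Next I would cap the singularities of $s$ without disturbing it near infinity. Choose a disk $D\supset Z(\phi)$ with $\phi\neq0$ on $\partial D$, and replace the subharmonic function $\tfrac{2}{k}\log|P|$ inside $D$ by the harmonic function with the same boundary values on $\partial D$; calling the result $\widehat s$, one gets a continuous function with $\widehat s\ge s$, $\widehat s=s$ outside $D$, $\widehat s$ finite and smooth inside $D$, and $\triangle\widehat s\le0$ in the distributional sense (a superharmonic corner along $\partial D$). For a fixed $\epsilon_0>0$ set $\overline w:=\widehat s+\epsilon_0$. Since $\overline w>s$ gives $F(\overline w)>0\ge\triangle\overline w$, the function $\overline w$ is a supersolution of \eqref{main}, and $\overline w=s+\epsilon_0$ near infinity keeps it incomplete along $\gamma$. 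As $s\le\overline w$, monotone iteration (Perron's method on an exhaustion of $\mathbb{C}$ by disks, together with interior elliptic estimates) produces a $C^{\infty}$ solution $w_2$ with $s\le w_2\le\overline w$. The upper bound yields $\int_\gamma e^{w_2/2}\,|dz|\le e^{\epsilon_0/2}\int_\gamma e^{s/2}\,|dz|<\infty$, so $e^{w_2}|dz|^2$ is incomplete.

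Finally, to compare with $w_1$ I would invoke the maximality of the complete solution, which follows from the same maximum-principle mechanism behind Proposition \ref{main1}: applying the Omori--Yau maximum principle on the complete surface $(\mathbb{C},e^{w_1}|dz|^2)$ to $u-w_1$ for an arbitrary solution $u$ shows $u\le w_1$, hence $w_2\le w_1$. Since $e^{w_1}|dz|^2$ is complete while $e^{w_2}|dz|^2$ is not, $w_2\not\equiv w_1$, and the strong maximum principle applied to $w_1-w_2$, which satisfies $\triangle(w_1-w_2)=c(z)(w_1-w_2)$ with $c=\frac{F(w_1)-F(w_2)}{w_1-w_2}>0$, upgrades this to $w_2<w_1$ everywhere. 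This produces the two ordered solutions $w_1>w_2$ with the asserted completeness properties.
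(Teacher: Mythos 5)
Your overall architecture matches the paper's: keep the complete solution, build a second solution squeezed between $\tfrac{2}{k}\log|\phi|$ and $\tfrac{2}{k}\log|\phi|+c$ near infinity, show the resulting metric is incomplete because $\phi$ is not a polynomial, and order the two solutions by a maximum principle on the complete one. However, there is a genuine gap in your supersolution. The harmonic replacement of the subharmonic function $\tfrac{2}{k}\log|P|$ inside $D$ produces a \emph{subharmonic} function, not a superharmonic one: the glued function $\widehat s$ lies above $s$ inside $D$ and meets it along $\partial D$, so its distributional Laplacian is a \emph{positive} measure concentrated on $\partial D$ (compare $\log^+|z|=\max\{0,\log|z|\}$, whose Laplacian is the positive uniform measure on the unit circle). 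Hence $\triangle\overline w$ has a positive singular part on $\partial D$ and cannot be dominated by the bounded function $F(\overline w)$; the inequality $\triangle\overline w\le F(\overline w)$ fails in the sense of distributions, and $\overline w$ is not a supersolution. This is not cosmetic: gluing supersolutions requires a concave corner, i.e.\ a $\min$ of two supersolutions, and near the zeros of $\phi$ you need a genuinely finite supersolution to cap $s+\epsilon_0$ from above while staying above the subsolution. The paper supplies this by taking the complete solution $u_1$ of the equation on a disk $B_{R_1}$ containing all zeros (Proposition \ref{hyperbolicdomain}), which blows up at $\partial B_{R_1}$, and setting $w_+=\min\{u_1,\tfrac{2}{k}\log|\phi|+c\}$ on an annulus; no harmonic replacement can substitute for this.

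Two further points. First, the incompleteness of $|\phi|^{2/k}|dz|^2$ at infinity, which you correctly flag as the crux, is left as an appeal to a classification of complete flat ends that you do not prove; the paper gives a self-contained argument (Lemma \ref{PolynomialComplete}, following Osserman): assuming completeness along every divergent path, one inverts a branch of $\bigl(\int z^Ne^{G(z)}dz\bigr)^{1/(N+1)}$, extends the inverse to all of $\mathbb{C}$, and concludes $G$ is constant. Second, your final comparison step needs the extra input $|\phi|^2e^{-kw_1}\le 1$ (established in the paper's Theorem \ref{CompleteSolution}) before Omori--Yau can be applied to $u-w_1$, and a Cheng--Yau step to first bound $u-w_1$ from above; both are recoverable but missing from your sketch. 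Also note that your subsolution $s$ is not in $H^1_{loc}$ near the zeros of $\phi$, which is why the paper's subsolution is $\max\{u_1-c,\tfrac{2}{k}\log|\phi|\}$ there.
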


\begin{rem}
It would be interesting to know what happens when $\phi$ has infinite zeros, for example, $\sin \pi z$. 
\end{rem}

\subsection{Geometric Applications}
\subsubsection{Harmonic maps with prescribed Hopf differential}
Consider a harmonic map $u:\mathbb{C}\rightarrow \mathbb{H}^2$. Suppose $\partial u$ does not vanish. Writing $w=\frac{1}{2}\log |\partial u|^2$ and let $q$ be the Hopf differential of $u$. Then $u$ being harmonic is equivalent to $q$ being holomorphic and that the following equation holds on $\mathbb{C}$:
\[\triangle w=e^{2w}-|q|^2e^{-2w}.\]

As an application of Theorem \ref{mainmain1} and \ref{mainmainmain1}, we have
\begin{thm} Suppose $q$ is a nonzero entire function on $\mathbb{C}$ with finite zeros, then\\
(1) if $q$ is a polynomial, there exists a unique harmonic map $u:\mathbb{C}\rightarrow \mathbb{H}^2$ satisfying $\partial u\neq 0$ with its Hopf differential $qdz^2$ up to an isometry of $\mathbb{H}^2$. Moreover, it is an orientation-preserving harmonic embedding.\\
(2) if $q$ is not a polynomial, there are at least two harmonic maps $u_1,u_2:\mathbb{C}\rightarrow \mathbb{H}^2$ satisfying $\partial u_i\neq 0$ not related by an isometry of $\mathbb{H}^2$ with the given $qdz^2$ as their Hopf differential.
\end{thm}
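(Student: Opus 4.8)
The plan is to read the assertion as a corollary of Theorems~\ref{mainmain1} and~\ref{mainmainmain1} through the standard dictionary between harmonic maps into $\mathbb{H}^2$ and solutions of the vortex equation~(\ref{main}) with $k=2$. First I would pin down the correspondence. If $u$ is harmonic with $\partial u\neq 0$ and Hopf differential $q\,dz^2$, then $w_0:=\tfrac12\log|\partial u|^2$ is a smooth solution of $\triangle w_0=e^{2w_0}-|q|^2e^{-2w_0}$, and the substitution $w=2w_0+\log 2$, $\phi=2q$ turns this exactly into~(\ref{main}) with $k=2$. Under this substitution $e^{w}|dz|^2=2|\partial u|^2|dz|^2$, so completeness of $e^{w}|dz|^2$ matches completeness of the holomorphic-energy metric $|\partial u|^2|dz|^2$, and $\phi$ is polynomial (resp.\ non-polynomial entire with finitely many zeros) precisely when $q$ is. Conversely, a smooth solution $w$ of~(\ref{main}) yields the pair $\bigl(e^{2w_0}|dz|^2,\,q\,dz^2\bigr)$ satisfying the Gauss-type compatibility condition, which is exactly the data needed to integrate a harmonic map into $\mathbb{H}^2$; since $\mathbb{C}$ is simply connected the associated frame ODE integrates with no monodromy, and the only freedom is post-composition by an isometry of $\mathbb{H}^2$. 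This is the classical construction of~\cite{WAN,HTTW}, which I would cite rather than reprove.

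Second, I would record that $w_0$ (equivalently $w$) is a complete invariant of $u$ up to isometry: an isometry of $\mathbb{H}^2$ preserves $|\partial u|^2$, hence preserves $w_0$, so two harmonic maps with the same $q$ are isometric if and only if they share the same $w$. Combined with the previous paragraph, this makes $w\mapsto u$ a bijection between solutions of~(\ref{main}) and isometry classes of harmonic maps with $\partial u\neq0$ and Hopf differential $q\,dz^2$.

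With this dictionary the two counts are immediate. For (1), any such harmonic map produces a smooth solution $w$ of~(\ref{main}), which is unique by Theorem~\ref{mainmain1}; hence $u$ is unique up to isometry. For (2), Theorem~\ref{mainmainmain1} produces two solutions $w_1>w_2$, and the preceding paragraph shows the corresponding $u_1,u_2$ are not related by an isometry (both satisfy $\partial u_i\neq0$ since $|\partial u_i|^2=e^{2w_{0,i}}>0$).

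The remaining and, I expect, most delicate point is the embedding claim in~(1). Using the standard identity $|\partial u|^2|\bar\partial u|^2=|q|^2$, the Jacobian is $J=|\partial u|^2-|\bar\partial u|^2=e^{2w_0}-|q|^2e^{-2w_0}$, and a direct computation shows that $J>0$ is equivalent to the holomorphic-energy metric (hence $\sigma=e^{w}|dz|^2$, which differs from it only by the constant factor $2$) being negatively curved, i.e.\ to $|\phi|^2_\sigma<1$. By Theorem~\ref{mainmain1} the unique solution coincides with the complete one of Proposition~\ref{main1}, whose metric is negatively curved by the completeness-implies-negative-curvature results cited for that proposition; thus $J>0$ everywhere and $u$ is an orientation-preserving local diffeomorphism. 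Upgrading this to a global embedding is where I expect to spend the most effort: I would check that the pullback metric $u^{*}g_{\mathbb{H}^2}$ is complete and appeal to the theory of complete harmonic diffeomorphisms from $\mathbb{C}$ to $\mathbb{H}^2$ to obtain injectivity, the passage from local to global injectivity being the step requiring the most care.
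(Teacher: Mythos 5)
Your overall strategy---translating the statement into a count of solutions of Equation (\ref{main}) with $k=2$ and invoking Theorems \ref{mainmain1} and \ref{mainmainmain1}---is the same as the paper's, and your observation that an isometry of $\mathbb{H}^2$ preserves $|\partial u|^2$ correctly settles part (2) once both solutions $w_1>w_2$ are realized by harmonic maps. Your uniqueness argument for part (1), via a normalized frame determined by $(w,q)$ and triviality of monodromy on the simply connected domain, is a defensible alternative to the paper's route; the paper instead first shows $J=e^{2w}-|q|^2e^{-2w}>0$ (using $|q|^2e^{-4w}<1$ from Theorem \ref{Unique} and Proposition \ref{strictlynegative}), concludes that $u_1\circ u_2^{-1}$ is a local isometry of the real-analytic hyperbolic metric, extends it to a global isometry $\tau$, and finishes with Sampson's unique continuation theorem to get $u_1=\tau\circ u_2$. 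Either way the existence construction (which you and the paper both outsource, the paper to Milnor's method of realizing $(w,q)$ as the fundamental forms of a spacelike CMC immersion into $\mathbb{M}^{2,1}$ whose Gauss map is the desired harmonic map) is needed for both parts.

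The genuine gap is your plan for the embedding claim in part (1). You propose to verify that the pullback metric $u^{*}g_{\mathbb{H}^2}$ is complete and then pass from local to global injectivity. That verification must fail: a local diffeomorphism whose pullback metric is complete is a Riemannian covering of the (simply connected) target and hence surjective onto $\mathbb{H}^2$, whereas for a polynomial Hopf differential of degree $k$ the image of $u$ is an ideal $(k+2)$-gon, a proper open subset of $\mathbb{H}^2$ (see the remark after Theorem \ref{UniqueHarmonic}, citing \cite{HAN,HTTW}); equivalently, $u^{*}g_{\mathbb{H}^2}$ is isometric to that polygon with the restricted hyperbolic metric, which is incomplete. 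So no argument predicated on completeness of the pullback metric can produce the embedding. The paper's actual mechanism is different and essential: the induced metric $e^{2w}|dz|^2$ on the CMC surface $f(\mathbb{C})\subset\mathbb{M}^{2,1}$ is complete, which by \cite{WAN} forces $f$ to be an entire spacelike graph, and Theorem 4.8 of \cite{ChoiTreibergs} then asserts that the Gauss map of an entire spacelike CMC graph is a diffeomorphism onto its image. Some global input of this kind is what your outline is missing; $J>0$ alone gives only a local diffeomorphism.
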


\subsubsection{Affine spherical immersions with prescribed Pick differential}
Consider a locallystrictly convex immersed hypersurface $f:M\rightarrow \mathbb{R}^3$. The affine differential geometry associates to such a locally convex immersed surface a special transverse vector field $\xi$, the affine normal. Being a hyperbolic affine spherical immersion means the affine normal of each image point $f(p)$ meets at a point (the center), which lies in a convex side of the hypersurface. 

Relative to the affine normal, there are two important objects on $M$: (1) the second fundamental form induces a Riemannian metric $h$, called the Blaschke metric; (2) a holomorphic cubic differential $Udz^3$, called the Pick differential, measuring the difference between the induced connection of the immersion and the Levi-Civita connection with respect to $h$. 
 
In the case when $(M,h)$ is conformal to $(\mathbb{C},|dz|^2)$, we reparametrize the affine spherical immersion as $f:\mathbb{C}\rightarrow \mathbb{R}^3$. Following Wang \cite{Wang} and Simon-Wang \cite{SimonWang}, $f$ being the affine spherical immersion is equivalent to $U$ being holomorphic and the Blaschke metric $e^w|dz|$ satisfies
\[\triangle w=2e^w-4|U|^2e^{-2w}.\]

As an application of Theorem \ref{mainmain1} and \ref{mainmainmain1}, we have
\begin{thm} Suppose $U$ is a nonzero entire function on $\mathbb{C}$ with finite zeros, then\\
(1)  if $U$ is a polynomial, there is a unique affine spherical immersion $f:\mathbb{C}\rightarrow \mathbb{R}^3$ with its Pick differential as $Udz^3$ up to a projective transformation of $\mathbb{R}^3$. Moreover, it is complete, properly embedded and its Blaschke metric is negatively curved.\\
(2) if $U$ is not a polynomial, there are at least two affine spherical immersions $f_1,f_2:\mathbb{C}\rightarrow \mathbb{R}^3$ not related by a projective transformation of $\mathbb{R}^3$ with the given $Udz^3$ as their Pick differential.
\end{thm}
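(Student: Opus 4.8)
The plan is to translate the Blaschke-metric equation into the vortex equation (\ref{main}) with $k=3$, and then use the fundamental theorem of affine differential geometry to move back and forth between solutions $w$ and immersions $f$. Performing the constant shift $w_a = w - \log 2$ in $\triangle w_a = 2e^{w_a} - 4|U|^2 e^{-2w_a}$ gives, after a one-line computation, $\triangle w = e^w - 16|U|^2 e^{-2w} = e^w - |\phi|^2 e^{-2w}$ with $\phi = 4U$, which is precisely (\ref{main}) for $k=3$. Hence smooth solutions $w_a$ of the Blaschke equation correspond bijectively to smooth solutions $w$ of the vortex equation for the entire function $\phi = 4U$, and since the two conformal metrics differ only by the constant factor $\tfrac12$, one is complete if and only if the other is. The cubic differential $Udz^3$ is holomorphic by hypothesis, so by Wang and Simon-Wang \cite{Wang, SimonWang} a conformal metric $e^{w_a}|dz|^2$ is the Blaschke metric of a hyperbolic affine spherical immersion $f:\mathbb{C}\to\mathbb{R}^3$ with Pick differential $Udz^3$ exactly when $w_a$ solves the Blaschke equation; moreover, because $\mathbb{C}$ is simply connected the structure equations integrate to a globally defined frame, so $f$ is determined by the pair $(w_a, U)$ uniquely up to the $SL(3,\mathbb{R})$-action on $\mathbb{R}^3$, i.e.\ up to a projective transformation.

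For part (1), if $U$ is a polynomial then so is $\phi = 4U$, and Theorem \ref{mainmain1} yields a unique smooth solution $w$, hence a unique $w_a$ and, via the correspondence, a unique affine spherical immersion $f$ up to projective transformation. By Proposition \ref{main1} this unique solution is the complete one, so the Blaschke metric is complete; completeness of the Blaschke metric then gives proper embeddedness (a complete hyperbolic affine sphere is properly embedded and asymptotic to a convex cone, by the theorem of Cheng-Yau), and it forces $|\phi|^2_\sigma = 16|U|^2 e^{-3w} < 1$ everywhere by the maximum-principle arguments of Calabi, Benoist-Hulin and Li-Li-Simon \cite{Calabi, BenoistHulin, LiLiSimon}. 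In view of the geometric interpretation $2K = -1 + |\phi|^2_\sigma$, this makes the Blaschke metric negatively curved at every point, which is the promised negative answer to Question \ref{DWquestion}.

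For part (2), if $U$ is non-polynomial with finite zeros then so is $\phi = 4U$, and Theorem \ref{mainmainmain1} supplies two solutions $w_1 > w_2$ with $e^{w_1}|dz|^2$ complete and $e^{w_2}|dz|^2$ incomplete. Shifting by $-\log 2$ and applying the correspondence produces two hyperbolic affine spherical immersions $f_1, f_2:\mathbb{C}\to\mathbb{R}^3$ sharing the Pick differential $Udz^3$, whose Blaschke metrics are respectively complete and incomplete. Since completeness of the Blaschke metric is invariant under the $SL(3,\mathbb{R})$-action, $f_1$ and $f_2$ cannot be related by any projective transformation of $\mathbb{R}^3$.

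The main obstacle is not the analytic input, which is exactly what Theorems \ref{mainmain1} and \ref{mainmainmain1} provide, but the rigorous justification of the two ends of this dictionary in the non-compact setting: the global reconstruction of $f$ from $(w_a, U)$ together with the sharp ``up to $SL(3,\mathbb{R})$'' ambiguity, and the passage from completeness of the Blaschke metric to proper embeddedness and strict negative curvature, where the cited results of Cheng-Yau, Calabi, Benoist-Hulin and Li-Li-Simon carry the load.
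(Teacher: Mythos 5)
Your proposal is correct and follows essentially the same route as the paper: reduce Wang's equation to the vortex equation for $\phi=4U$ (the paper does this via its rescaling remark rather than your explicit shift $w_a=w-\log 2$), invoke Theorems \ref{mainmain1} and \ref{mainmainmain1} for uniqueness respectively non-uniqueness of solutions, use the developing correspondence between pairs $(w,U)$ and immersions, and deduce proper embeddedness from completeness (the paper's Proposition \ref{CompleteEmbedding}, citing Li) and negative curvature from $|\phi|^2e^{-3w}<1$ (the paper's Proposition \ref{strictlynegative}). The only caveat, which the paper's own proof shares, is that the strict inequality---hence strict negative curvature---requires $U$ nonconstant, since for constant $U$ one has $|\phi|^2e^{-3w}\equiv 1$ and the Blaschke metric is flat.
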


\subsection*{Acknowledgments} The author wishes to thank Vlad Markovic, Song Dai and Mike Wolf for helpful discussions. The author is supported by the center of excellence grant `Center for Quantum Geometry of Moduli Spaces' from the Danish National Research Foundation (DNRF95). She also acknowledges the support from U.S. National Science Foundation grants DMS 1107452, 1107263, 1107367 ``RNMS: GEometric structures And Representation varieties'' (the GEAR Network).

\section{Main tools}
We collect here the main tools for complete manifolds used in the paper.
\subsection*{Omori-Yau Maximum Principle}(\cite{Omori, Yau75})
Suppose $(M,h)$ is a complete manifold with Ricci curvature bounded from below. Then for a $C^2$ function $u:M\rightarrow \mathbb{R}$ bounded from above, there exists a $m_0\in \mathbb{N}$ and a family of points $\{x_m\}\in M$ such that for each $m\geq m_0$,
\[u(x_m)\geq \sup u-\frac{1}{m}, \quad |\nabla_hu(x_m)|\leq \frac{1}{m}, \quad \triangle_h u(x_m)\leq \frac{1}{m},\]
where $\nabla_h$, $\triangle_h$ are the gradient and the Laplacian with respect to the background metric $h$ respectively.

\subsection*{Cheng-Yau Maximum Principle} (\cite{ChengYau})
Suppose $(M,h)$ is a complete manifold with Ricci curvature bounded from below. Let $u$ be a $C^2$-function defined on $M$ such that 
$\triangle_h u\geq f(u),$ where $f:\mathbb{R}\rightarrow \mathbb{R}$ is a function. Suppose there is a continuous positive function $g(t):[a,\infty)\rightarrow \mathbb{R}_+$ such that \\
(i) $g$ is non-decreasing; \\(ii) $\lim\inf_{t\rightarrow \infty}\frac{f(t)}{g(t)}>0$;\\ 
(iii) $\int_a^{\infty}(\int_b^tg(\tau)d\tau)^{-\frac{1}{2}}dt<\infty$, for some $b\geq a$,\\
 then the function $u$ is bounded from above. Moreover, if $f$ is lower semi-continuous, $f(\sup u)\leq 0$.

\subsection*{Supersolution and subsolution method}
Let $F(x,u)$ be a $C^{\infty}$ function defined on $M\times \mathbb{R}$ where $(M,h)$ is a complete Riemannian manifold. Furthermore, assume that 
\[\frac{\partial F}{\partial u}>0 \quad\quad\quad \text{for all} \quad (x,u)\in M\times \mathbb{R}.\] 

\begin{lem}\label{supersub} (see \cite{WAN})
If there exists $w_{+},w_-\in C^0(M)\cap H_{loc}^1(M)$ such that $w_-\leq w_+$ and 
\[\triangle w_+\leq F(x,w_+),\quad\quad \triangle w_-\geq F(x,w_-)\] are satisfied weakly (in the sense of distribution), then there is a $C^{\infty}$ solution $w$ to the equation \[\triangle w= F(x,w),\]
such that $w_-\leq w\leq w_+.$
\end{lem}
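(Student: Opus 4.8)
The plan is to reduce the construction of a global solution to a sequence of Dirichlet problems on an exhaustion of $M$, solving each bounded piece by the classical monotone iteration that the hypothesis $\partial F/\partial u>0$ makes available, and then extracting the global solution by elliptic compactness. Concretely, I would first fix relatively compact open sets $\Omega_1\subset\Omega_2\subset\cdots$ with smooth boundaries and $\bigcup_j\Omega_j=M$ (obtained from a smooth proper exhaustion function together with Sard's theorem). On each compact $\overline{\Omega_j}$ the continuous barriers $w_\pm$ take values in a compact interval, so I may pick a constant $c_j$ larger than $\sup\partial F/\partial u$ over $\overline{\Omega_j}$ times that interval; then $G_j(x,u):=F(x,u)-c_ju$ is strictly decreasing in $u$ on the relevant range, and the operator $\triangle-c_j$ is coercive and satisfies the weak maximum principle.

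On $\Omega_j$ I would solve $\triangle w=F(x,w)$ with boundary value $w_+$ by the iteration $w^{(0)}=w_+$ and $(\triangle-c_j)w^{(n+1)}=G_j(\cdot,w^{(n)})$ with $w^{(n+1)}-w_+\in H^1_0(\Omega_j)$, each step uniquely solvable by Lax--Milgram since $c_j>0$. The weak supersolution inequality for $w_+$, the weak subsolution inequality for $w_-$, and the monotonicity of $G_j$ combine with the maximum principle for $\triangle-c_j$ to show inductively that $w_-\le\cdots\le w^{(n+1)}\le w^{(n)}\le\cdots\le w_+$. The monotone pointwise limit is a weak solution of the Dirichlet problem trapped between $w_-$ and $w_+$; since $F\in C^\infty$, elliptic regularity and bootstrapping upgrade it to a smooth solution $w_j$ on $\Omega_j$ with $w_-\le w_j\le w_+$.

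Finally I would let $j\to\infty$. On any fixed $\Omega_k$ the solutions $w_j$ with $j>k$ are uniformly bounded by $\max_{\overline{\Omega_k}}|w_\pm|$, so interior Schauder estimates and bootstrapping yield uniform local $C^\infty$ bounds; a diagonal subsequence then converges in $C^\infty_{loc}(M)$ to a smooth $w$ solving $\triangle w=F(x,w)$ on all of $M$ with $w_-\le w\le w_+$, as required.

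The step I expect to be most delicate is running the monotone iteration at the stated low regularity: because $w_\pm$ lie only in $C^0\cap H^1_{loc}$ and the sub/supersolution inequalities hold only in the distributional sense, the comparison arguments must be carried out weakly---testing the relevant inequalities against nonnegative functions in $H^1_0(\Omega_j)$---rather than by pointwise evaluation at interior extrema. I would also remark that completeness of $(M,h)$ plays no essential role in this particular lemma, the argument being purely local-to-global; the hypothesis is simply inherited from the setting in which the lemma is later applied.
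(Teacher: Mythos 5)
The paper states this lemma without proof, deferring entirely to the reference \cite{WAN}, and your exhaustion-plus-monotone-iteration argument (linearize with $\triangle-c_j$ on each $\Omega_j$, iterate downward from $w_+$, compare weakly by testing against nonnegative $H^1_0$ functions, then bootstrap and diagonalize) is precisely the standard proof of the sub/supersolution method used there. Your argument is correct, and your closing observations---that the comparisons must be run in the weak formulation and that completeness of $(M,h)$ is not needed for this lemma---are both accurate.
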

In our setting $F(x,u)=e^u-|\phi|^2e^{-(k-1)u}$, satisfying $\frac{\partial F}{\partial u}>0$.

\section{Complete solutions}
In this section, given an entire function $\phi$ on $\mathbb{C}$, we show the existence and uniqueness of a solution $w$ to
\begin{equation*}\triangle w=e^{w}-|\phi|^2 e^{-(k-1)w}\end{equation*}
 such that the metric $e^w|dz|^2$ is complete. 

\begin{thm}\label{CompleteSolution}
Let $\phi$ be a nonzero entire function on $\mathbb{C}$, there is a unique $C^{\infty}$ solution $w$ to Equation (\ref{main})
on $\mathbb{C}$ such that the metric $e^w|dz|^2$ is complete. 
\end{thm}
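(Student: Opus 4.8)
The plan is to establish existence and uniqueness separately, with the analytic core being a two-sided \emph{a priori} curvature bound extracted from the two maximum principles above. Throughout I write $\sigma=e^{w}|dz|^2$ and note the unconditional lower bound: rewriting the equation as $2K_\sigma=-1+|\phi|_\sigma^2$ and using $|\phi|_\sigma^2=|\phi|^2e^{-kw}\geq 0$ gives $K_\sigma\geq-\tfrac12$ for \emph{any} solution. On the surface $\mathbb{C}$ this means $\mathrm{Ric}_\sigma\geq-\tfrac12\,\sigma$, so every complete solution metric satisfies the hypotheses of the Omori--Yau and Cheng--Yau principles. For existence I would exhaust $\mathbb{C}$ by disks $D_R$ and solve the Dirichlet problem with infinite boundary data on each: the Poincar\'e metric $w_P$ of curvature $-\tfrac12$ on $D_R$ is a complete \emph{subsolution}, since $\triangle w_P=e^{w_P}\geq e^{w_P}-|\phi|^2e^{-(k-1)w_P}=F(w_P)$, while a constant taken large on $\overline{D_R}$ (or a barrier blowing up at $\partial D_R$) serves as a supersolution above it. Lemma \ref{supersub} then yields $w_R\geq w_P$, and monotonicity in $R$ together with interior elliptic estimates produces a limit $w$ on $\mathbb{C}$; completeness of $\sigma$ near infinity is forced by a subsolution adapted to the growth of $|\phi|$, following the constructions in \cite{WAN,HAN,HTTW,DumasWolf}.

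The crux is the upper curvature bound, i.e. the statement that completeness of $\sigma$ forces $|\phi|_\sigma^2\leq 1$ (nonpositive curvature). Setting $P:=|\phi|^2e^{-kw}=|\phi|_\sigma^2$ and using that $\log|\phi|^2$ is harmonic away from the zeros of $\phi$, a direct computation gives, on $\mathbb{C}\setminus\phi^{-1}(0)$,
\[
\triangle_\sigma \log P \;=\; e^{-w}\bigl(\triangle\log|\phi|^2-k\triangle w\bigr)\;=\;k\,(P-1).
\]
Thus $u:=\log P$ satisfies $\triangle_\sigma u=f(u)$ with $f(t)=k(e^{t}-1)$, which is nondecreasing, positive for $t>0$, and of exponential growth, so conditions (i)--(iii) of the Cheng--Yau principle hold with $g(t)=e^{t}$. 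Since $(\mathbb{C},\sigma)$ is complete with $\mathrm{Ric}_\sigma$ bounded below by $-\tfrac12\sigma$, Cheng--Yau yields that $u$ is bounded above and $f(\sup u)\leq 0$, whence $\sup u\leq 0$, i.e. $P\leq 1$ everywhere. I expect this to be the main obstacle, for two reasons: one must justify the maximum-principle input at the zeros of $\phi$, where $\log P\to-\infty$ and $|\nabla_\sigma\log P|$ blows up (handled either by applying the principle to $P$ itself, which is smooth and whose drift term $|\nabla_\sigma P|^2/P$ stays bounded near a zero, or by an $\varepsilon$-regularization), and one must verify the completeness and curvature hypotheses carefully. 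This is exactly the step for which I would import the techniques of \cite{BenoistHulin,Calabi,LiLiSimon}.

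For uniqueness, suppose $w_1,w_2$ are two complete solutions and set $v:=w_1-w_2$. Using $\sigma_2=e^{w_2}|dz|^2$ as background metric, the difference of the two equations gives
\[
\triangle_{\sigma_2} v \;=\;(e^{v}-1)-|\phi|^2e^{-kw_2}\bigl(e^{-(k-1)v}-1\bigr).
\]
Writing $a:=|\phi|_{\sigma_2}^2\in[0,1]$ (by the curvature bound of the previous paragraph applied to $\sigma_2$), the right-hand side is a strictly increasing function of $v$ vanishing at $v=0$, and it admits the single lower barrier $f$ defined by $f(t)=e^{t}-1$ for $t\geq 0$ and $f(t)=e^{t}-e^{-(k-1)t}$ for $t<0$: indeed for $v>0$ the second term is nonnegative, while for $v<0$ the bound $a\leq 1$ controls it. This $f$ is continuous, strictly increasing, $f(0)=0$, and grows like $e^{t}$, so Cheng--Yau applied to $v$ on the complete surface $(\mathbb{C},\sigma_2)$ (again with $\mathrm{Ric}_{\sigma_2}\geq-\tfrac12\sigma_2$) gives $f(\sup v)\leq 0$, hence $\sup v\leq 0$, i.e. $w_1\leq w_2$. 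Exchanging the roles of $w_1$ and $w_2$ yields $w_1\geq w_2$, and therefore $w_1=w_2$, proving uniqueness.
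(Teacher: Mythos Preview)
Your proposal is correct and follows essentially the same architecture as the paper: first derive the unconditional bound $K_\sigma\ge-\tfrac12$, then use Cheng--Yau on the complete metric $\sigma$ to prove $|\phi|_\sigma^2\le 1$, and finally run a maximum-principle comparison on the difference of two solutions. Two implementation points are worth noting.

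For the curvature bound, the paper sidesteps exactly the obstacle you flag at the zeros of $\phi$ by working with $\tau=\log(1+P)$ rather than $\log P$; since $P=|\phi|^2e^{-kw}$ is smooth and nonnegative, $\tau$ is globally $C^2$ and one computes $\triangle_\sigma\tau\ge k(e^{\tau}-3+2e^{-\tau})$, so Cheng--Yau applies directly and yields $e^{\sup\tau}\le 2$, i.e.\ $P\le 1$. This is cleaner than either regularizing $\log P$ or redoing the estimate for $P$ itself. For uniqueness, the paper packages the comparison as a one-sided lemma (Lemma~\ref{weakunique}): if $w$ is complete with $P\le 1$ and $w_1$ is \emph{any} other solution, then $w_1\le w$ (and strictly so unless equal). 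Your symmetric argument, using completeness of both metrics, is sufficient for the theorem, but the one-sided form is reused later in the paper to compare the complete solution with incomplete ones. Finally, the paper's existence construction differs in detail from your disk-exhaustion sketch: it glues the canonical solutions from Proposition~\ref{hyperbolicdomain} on a large ball and on the complement of a small ball to produce an explicit global supersolution, and pairs this with a subsolution built from $\tfrac{2}{k}\log|\phi|$ and a shifted exterior solution, which is what guarantees completeness of the resulting metric.
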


\begin{rem} Under the substitution by $w':=cw+\log d$, the above theorem can also be applied to the equation 
\begin{equation*}c\cdot\triangle w=de^{cw}-\frac{1}{d^{k-1}}|\phi|^2 e^{-c(k-1)w},\end{equation*} for any $c,d>0$. So through the paper, whenever referring to the above theorem, we use it with different constants without further explanation.
\end{rem}
We prove the above theorem in the end of this section.

To show the existence of solutions on $\mathbb{C}$, we start with the existence of solutions on a domain $(U,h)$, which lifts to the unit disk $\mathbb{D}$ with the Poincar\'e metric. The proof of the following proposition is easily adapted from the proof of Proposition 4  in \cite{Parabolic}.
\begin{prop}\label{hyperbolicdomain}
Let $\phi dz^k$ be a holomorphic $k$-differential on $(U,h)$, then Equation (\ref{main}) has a unique $C^{\infty}$ solution $w$ on $U$ such that $e^w|dz|^2$ is a complete metric on $U$ and $|\phi|^2 e^{-kw}\leq 1$. Moreover, $w$ satisfies $h\leq e^w|dz|^2$.
\end{prop}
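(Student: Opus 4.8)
The plan is to produce the solution as a decreasing limit of complete solutions on an exhaustion of $U$, and then to prove uniqueness by a maximum–principle argument in which the normalization $|\phi|^2 e^{-kw}\le 1$ plays the decisive role. Throughout I write $\sigma=e^{w}|dz|^2$ and use the geometric form $2K_\sigma=-1+|\phi|^2 e^{-kw}$ of Equation \eqref{main}. Two weak subsolutions are available from the outset. The first is the Poincar\'e density $w_h$ (so that $h=e^{w_h}|dz|^2$): since $h$ has constant negative curvature, $\triangle w_h\ge e^{w_h}\ge F(x,w_h)$, and this will force $w\ge w_h$, i.e.\ $h\le\sigma$ and completeness. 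The second is the density $\tfrac{2}{k}\log|\phi|$ of the flat cone metric $|\phi|^{2/k}|dz|^2$, which solves \eqref{main} away from the zeros of $\phi$ (both sides vanish there) and has nonnegative distributional Laplacian across the zeros; this will force $w\ge\tfrac2k\log|\phi|$, i.e.\ $|\phi|^2 e^{-kw}\le 1$.

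For existence I would exhaust $U$ by relatively compact smooth subdomains $D_1\subset\subset D_2\subset\subset\cdots$ with $\bigcup_n D_n=U$. On each $D_n$ the holomorphic $\phi$ is bounded, so one can set up the boundary–blow–up problem for \eqref{main}: as subsolution take $\max\bigl(u_n,\tfrac2k\log|\phi|\bigr)$, where $u_n$ is the density of the complete hyperbolic metric of $D_n$ (curvature $-\tfrac12$, so $\triangle u_n=e^{u_n}\ge F(x,u_n)$), and as supersolution take $\tilde u_n+C_n$, a shift of the density of a complete metric of curvature $-c$ on $D_n$ with $0<c<\tfrac12$. Both blow up along $\partial D_n$; for $C_n$ large one checks $(1-2ce^{-C_n})e^{k(\tilde u_n+C_n)}\ge|\phi|^2$ on $D_n$, so $\tilde u_n+C_n$ is a supersolution, and by the Schwarz--Ahlfors comparison it dominates the subsolution. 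Solving (via Lemma \ref{supersub} with finite boundary data $M$ and letting $M\to\infty$) gives a smooth $w_n$ on $D_n$ with $e^{w_n}|dz|^2$ complete and $w_n\ge\tfrac2k\log|\phi|$. The comparison principle (using $\partial_u F>0$) yields monotonicity $w_{n+1}\le w_n$ on $D_n$, while $w_n\ge u_n\ge w_h$ provides a uniform lower bound. Hence $w_n\downarrow w$, the convergence is $C^\infty_{loc}$ by interior elliptic estimates, and $w$ solves \eqref{main} on $U$; passing the two inequalities to the limit gives $h\le\sigma$ (so $\sigma$ is complete) and $|\phi|^2 e^{-kw}\le 1$.

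The main obstacle is uniqueness, and this is exactly where completeness together with $|\phi|^2 e^{-kw}\le 1$ is used. The constraint forces $-\tfrac12\le K_\sigma\le 0$, so every admissible solution metric is complete with bounded curvature, hence has Ricci curvature bounded below — precisely the standing hypothesis of the Cheng--Yau maximum principle. Given two such solutions $w_1,w_2$, set $v=w_1-w_2$ and $\triangle_{\sigma_2}=e^{-w_2}\triangle$; a direct computation using $e^{-(k-1)w_1}=e^{-(k-1)w_2}e^{-(k-1)v}$ gives
\[\triangle_{\sigma_2} v=\bigl(e^{v}-1\bigr)+e^{-kw_2}|\phi|^2\bigl(1-e^{-(k-1)v}\bigr).\]
Both summands carry the sign of $v$, so $\triangle_{\sigma_2}v\ge e^{v}-1$ wherever $v\ge 0$, and in fact $\triangle_{\sigma_2}v\ge f(v)$ globally for a continuous $f$ with $f(t)=e^{t}-1$ for $t\ge 0$. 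Applying Cheng--Yau on $(U,\sigma_2)$ with $g(t)=e^{t}$ (whose three hypotheses are immediate) yields $f(\sup v)\le 0$, hence $\sup v\le 0$, i.e.\ $w_1\le w_2$; exchanging the roles of $w_1,w_2$ gives $w_1=w_2$. The delicate points to treat with care are the distributional behaviour of the subsolutions at the zeros of $\phi$, the solvability of the boundary–blow–up problems on the $D_n$ (both standard for Liouville–type equations), and the verification that the constraint $|\phi|^2 e^{-kw}\le 1$ really supplies the curvature bound needed to invoke Cheng--Yau.
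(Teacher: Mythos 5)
Your argument is essentially the standard one, and it is the route the paper itself implicitly takes: the paper gives no proof of Proposition \ref{hyperbolicdomain}, deferring to Proposition 4 of \cite{Parabolic}, and your uniqueness step is precisely the mechanism of the paper's Lemma \ref{weakunique} transplanted from $(\mathbb{C}, e^{w}|dz|^2)$ to $(U,\sigma_2)$ --- the identity for $\triangle_{\sigma_2}v$, the use of $|\phi|^2e^{-kw_2}\le 1$ both to bound $K_{\sigma_2}\ge -\tfrac12$ and to control the second summand for $v<0$, and Cheng--Yau with $g(t)=e^{t}$ are all correct. The existence by exhaustion with the two subsolutions $w_h$ and $\tfrac2k\log|\phi|$ and a shifted curvature-$(-c)$ supersolution is also sound and yields all three conclusions ($h\le e^{w}|dz|^2$, completeness, $|\phi|^2e^{-kw}\le 1$) in the limit. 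Two points deserve explicit care rather than a parenthetical: (i) Lemma \ref{supersub} as stated in the paper is for complete background manifolds, so the boundary--blow-up problem on each $D_n$ really does require the truncation $\min(w_+,M)$, monotone limit $M\to\infty$, and interior elliptic estimates that you only gesture at; and (ii) $\tfrac2k\log|\phi|$ alone is not in $C^0\cap H^1_{loc}$ at the zeros of $\phi$ (it is $-\infty$ there), so one must use $\max\bigl(u_n,\tfrac2k\log|\phi|\bigr)$ as the subsolution, as you do --- near a zero the maximum is attained by $u_n$, which restores continuity. With those details written out, the proof is complete and is not a genuinely different approach from the intended one.
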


The following existence theorem mainly follows from \cite{Parabolic}, where they work with the case $k=2$. We include the proof here. 

\begin{prop}(Existence)
Let $\phi$ be a nonzero entire function on $\mathbb{C}$, there is a $C^{\infty}$ solution $w$ to Equation (\ref{main})
on $\mathbb{C}$ such that $e^w|dz|^2$ is complete. 
\end{prop}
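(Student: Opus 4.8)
The plan is to construct the solution on $\mathbb{C}$ as a monotone limit of the complete solutions produced by Proposition \ref{hyperbolicdomain} on an exhaustion of $\mathbb{C}$ by hyperbolic domains, and then to establish completeness of the limiting metric by a separate barrier argument. First I would exhaust $\mathbb{C}$ by the disks $\Omega_n=\{|z|<n\}$, each of which is hyperbolic, and apply Proposition \ref{hyperbolicdomain} to obtain on each $\Omega_n$ the unique $C^\infty$ solution $w_n$ of \eqref{main} for which $e^{w_n}|dz|^2$ is complete on $\Omega_n$, $|\phi|^2e^{-kw_n}\le 1$, and $h_{\Omega_n}\le e^{w_n}|dz|^2$ with $h_{\Omega_n}$ the Poincar\'e metric. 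The desired solution will be $w=\lim_n w_n$.

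The sequence $\{w_n\}$ is monotone decreasing: on $\Omega_m\subset\Omega_{m+1}$ both $w_m$ and $w_{m+1}$ solve \eqref{main}, while completeness of $e^{w_m}|dz|^2$ forces $w_m\to+\infty$ at $\partial\Omega_m$, so $w_{m+1}-w_m\to-\infty$ there and its supremum over $\Omega_m$ is attained in the interior (or is $\le0$). At an interior maximum with $w_{m+1}>w_m$ one would have $\triangle(w_{m+1}-w_m)\le0$, whereas $\triangle(w_{m+1}-w_m)=F(w_{m+1})-F(w_m)>0$ because $\partial F/\partial u>0$; hence $w_{m+1}\le w_m$ on $\Omega_m$. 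For the limit to be finite I would use the bound $w_n\ge\tfrac1k\log|\phi|^2$ coming from $|\phi|^2e^{-kw_n}\le1$, which controls $w_n$ from below away from the zeros of $\phi$; combined with interior Schauder and gradient estimates for \eqref{main} (the $w_n$ being locally uniformly bounded above by $w_m$ and below), this gives $C^\infty_{loc}$ convergence. Thus $w_n\downarrow w$ with $w\in C^\infty(\mathbb{C})$ solving \eqref{main}, and passing to the limit preserves $|\phi|^2e^{-kw}\le1$. Since completeness singles out the largest solution on each $\Omega_n$, the limit $w$ is the maximal solution on $\mathbb{C}$.

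The hard part is showing that $e^w|dz|^2$ is complete. This is genuinely delicate: a decreasing limit of complete metrics need not be complete, and indeed the free lower bound $e^w\ge|\phi|^{2/k}$ fails to be complete in general (for instance for $\phi=e^z$ along the negative real axis, where $|\phi|\to0$), reflecting the parabolicity of $\mathbb{C}$, on which no complete metric of curvature $\le-\tfrac12$ exists. The plan is to produce a complete subsolution $w_-$ of \eqref{main} lying below $w$. Where $|\phi|$ is large the model $\tfrac1k\log|\phi|^2$ is essentially an exact solution and yields a complete end; on each tract running to infinity along which $|\phi|\to0$ the term $|\phi|^2e^{-(k-1)w}$ is negligible, so the subsolution inequality reduces to $\triangle\psi\ge e^{\psi}$, i.e. to curvature $\le-\tfrac12$, which the complete metric of that hyperbolic tract supplies (for example $\psi=-2\log(1-\Re z)$ on a half-plane tract is complete at infinity while staying bounded near the tract boundary). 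Gluing these pieces by taking the maximum of subsolutions produces a global complete $w_-$; comparing $w$ with $w_-$ on each tract via the maximum principle (using $w\ge0$ on $\{|\phi|=1\}$ and $\partial F/\partial u>0$), or equivalently feeding $w_-$ into the supersolution-subsolution Lemma \ref{supersub} on $(\mathbb{C},|dz|^2)$ between $w_-$ and a supersolution, forces $w\ge w_-$, whence $e^w|dz|^2\ge e^{w_-}|dz|^2$ is complete.

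I expect the main obstacle to be exactly this completeness step, and within it the construction and comparison of the lower barrier along the tracts where $|\phi|$ decays at infinity. The monotone-limit existence of a (maximal) solution is a routine adaptation of the $k=2$ arguments in \cite{Parabolic}, whereas controlling the limit from below in the thin regions where the holomorphic datum $\phi$ becomes small is where the interplay between the $\phi$-term and the hyperbolic geometry of those regions must be used, and where the naive bound $\tfrac1k\log|\phi|^2$ is too weak to conclude.
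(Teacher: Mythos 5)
Your existence step is a legitimate variant of the paper's: the paper does not pass to a monotone limit over an exhaustion, but instead builds a single global supersolution/subsolution pair and applies Lemma \ref{supersub} once, taking $w_+$ to be a gluing (by minimum) of the complete solutions $u_1$ on a disk $B_{R_1}$ and $u_2$ on the exterior of a smaller disk $\overline{B}_{R_2}$ furnished by Proposition \ref{hyperbolicdomain}, and $w_-=\max\{\tfrac{2}{k}\log|\phi|,\,u_2-c\}$. Your monotone-limit scheme would also produce a (maximal) solution, modulo a point you gloss over: the lower bound $w_n\ge\tfrac{2}{k}\log|\phi|$ degenerates at the zeros of $\phi$, so the decreasing limit needs a separate local lower bound there (available, e.g., from $0\le\triangle w_n\le e^{w_{n_0}}$ locally, so that $w_n$ minus a quadratic is superharmonic and controlled by circle averages, which are finite since $\log|\phi|$ is locally integrable). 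That is a fixable omission.

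The genuine gap is in the completeness step, which you correctly identify as the crux. Your plan to build a complete subsolution tract by tract on the regions where $|\phi|\to 0$ is not carried out and does not obviously go through: for a general entire function those regions need not resemble half-planes, and your barrier $\psi$ must simultaneously satisfy $\triangle\psi\ge e^{\psi}$, be complete along every divergent path inside the region, and stay bounded above near the region's finite boundary so that the max-gluing with $\tfrac{2}{k}\log|\phi|$ is continuous; the natural candidate, the intrinsic hyperbolic metric of the tract, fails the last condition because it blows up at the finite boundary, and your half-plane substitute is tied to one special geometry. The paper's device avoids tracts entirely: the exterior $\mathbb{C}\setminus\overline{B}_{R_2}$ of a single disk is already a hyperbolic domain, so Proposition \ref{hyperbolicdomain} gives a solution $u_2$ of the actual equation there whose metric is complete and dominates the exterior's Poincar\'e metric; then $u_2-c$ is a subsolution (by monotonicity of $F$ in $u$), and $c$ is chosen on a compact annulus free of zeros of $\phi$ so that $u_2-c\le\tfrac{2}{k}\log|\phi|$ there, making $w_-=\max\{\tfrac{2}{k}\log|\phi|,\,u_2-c\}$ continuous and complete in every direction to infinity at once. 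Once such a $w_-$ exists, your comparison $w\ge w_-$ does work (each $w_n$ dominates any continuous subsolution on $\Omega_n$ because $w_n$ blows up at $\partial\Omega_n$); the missing ingredient is precisely this uniform lower barrier.
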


\begin{proof}
Without loss of generality, we assume $\phi(0)\neq 0$. Choose $R_1$ such that the zeros of $\phi$ are outside the ball $B_{R_1}$. Choose $R_2<R_1$. Let $u_1$ be the unique solution on $B_{R_1}$ and $u_2$ be the unique solution in the complement of the closed disk $\overline{B}_{R_2}$ as described in Proposition \ref{hyperbolicdomain}.

We first construct a supersolution $w_{+}$ as follows:\\
on $\{|z|<R_2\}$, let $w_{+}=u_1$;\\
on $\{R_2\leq |z|\leq R_1\}$, let $w_{+}=\min\{u_1,u_2\}$;\\
on $\{|z|>R_1\}$, let $w_+=u_2$.\\
By Proposition \ref{hyperbolicdomain}, since $e^{u_1}|dz|^2$ dominates the hyperbolic metric on $B_{R_1}$, the metric $e^{u_1}|dz|^2$ blows up at boundary of $B_{R_1}$. Then at the neighbourhood of $\partial B_{R_1}$, $w_{+}=u_2$. Similarly, since $e^{u_2}|dz|^2$ dominates the hyperbolic metric on the complement of $B_{R_2}$, the metric $e^{u_2}|dz|^2$ blows up at boundary of $B_{R_2}$. Then at the neighbourhood of $\partial B_{R_2}$, $w_{+}=u_1$. Hence $w_+$ is a continuous function. Since $u_1,u_2$ are both supersolutions on the annulus, then the minimum of $u_1$ and $u_2$ is a supersolution in the weak sense. Therefore $w_+\in C^0(M)\cap H^1_{loc}(M)$ is a supersolution.

Secondly, we construct a subsolution $w_-$. Choose $R_1', R_2'$ such that $R_2<R_2'<R_1'<R_1$. On the annulus $\{R_2'\leq |z|\leq R_1'\}$, $u_2$ has a maximum and $\frac{2}{k}\log |\phi|$ has a minimum using that the zeros of $\phi$ are all outside $B_{R_1}$. Then there exists a constant $c>0$ such that 
\[\frac{2}{k}\log|\phi|\geq u_2(z)-c\] holds on the annulus  $\{R_2'\leq |z|\leq R_1'\}$.

We construct the subsolution $w_{-}$ as follows:\\
onn $|z|<R_1'$, let $w_{-}=\frac{2}{k}\log|\phi|$;\\
onn $|z|\geq R_2'$, let $w_{-}=\max\{\frac{2}{k}\log |\phi|, u_2(z)-c\}.$
Our choice of $c$ assures $w_{-}$ is continuous. Since $\frac{2}{k}\log |\phi|,u_2-c$ are both subsolutions on $\{|z|\geq R_2'\}$, then the maximum of $\frac{2}{k}\log |\phi|$ and $u_2-c$ is a subsolution in the weak sense. Therefore $w_-\in  C^0(M)\cap H^1_{loc}(M)$ is a subsolution.

By Proposition \ref{hyperbolicdomain} $e^{u_i}\geq|\phi|^{\frac{2}{k}}$, for $i=1,2$. It is clear that $w_-\leq w_+$. Applying Lemma \ref{supersub}, there is a $C^{\infty}$ solution $w$ satisfying $w_-\leq w\leq w_+$. Given a path $\alpha$ diverging to $\infty$ on $\mathbb{C}$, then 
\[\int_{\alpha}e^{\frac{1}{2}w}|dz|\geq \int_{\alpha}e^{\frac{1}{2}w_-}|dz|=e^{-\frac{1}{2}c}\int_{\alpha\cap B_{R_1'}^c}e^{\frac{1}{2}u_2}|dz|=\infty,\]by the completeness of $e^{u_2}|dz|^2$. Hence the metric $e^w|dz|^2$ is complete.
\end{proof}

Now we show a weaker uniqueness result under two conditions that the metric $e^w|dz|^2$ is complete and $|\phi|^2e^{-kw}\leq 1$. The method is similar to the proof in \cite{WAN}, \cite{DumasWolf}.
\begin{lem} \label{weakunique}
Let $\phi$ be an entire function on $\mathbb{C}$. If there is a $C^{\infty}$ solution $w$ to Equation (\ref{main})
on $\mathbb{C}$ such that $e^w|dz|^2$ is complete and $|\phi|^2e^{-kw}\leq 1.$ Then for any another $C^{\infty}$ solution $w_1$, either $w_1<w$ on $\mathbb{C}$ or $w_1\equiv w$ on $\mathbb{C}$.
\end{lem}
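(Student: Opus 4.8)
The plan is to compare $w_1$ with the distinguished complete solution $w$ by running a maximum-principle argument on the complete surface $(\mathbb{C},h)$ with $h=e^{w}|dz|^2$. Set $v=w_1-w$. Subtracting the two copies of Equation (\ref{main}) and dividing by $e^{w}$ (equivalently, passing to the Laplacian $\triangle_h=e^{-w}\triangle$ of the background metric $h$) gives, with $a:=|\phi|^2e^{-kw}$,
\[\triangle_h v=(e^{v}-1)-a\,(e^{-(k-1)v}-1).\]
The hypotheses enter through two geometric facts about $h$. First, $h$ is complete by assumption. Second, by the geometric interpretation of Equation (\ref{main}) the Gauss curvature of $h$ equals $K=-\tfrac12(1-a)$, and since $0\le a=|\phi|^2e^{-kw}\le 1$ we get $-\tfrac12\le K\le 0$; in particular the Ricci curvature of $h$ is bounded from below. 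Thus $(\mathbb{C},h)$ is exactly the kind of manifold to which the Cheng--Yau maximum principle applies.

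Next I would produce a global lower bound $\triangle_h v\ge f(v)$ with $f$ independent of $z$. Using $0\le a\le 1$ and minimizing the right-hand side over $a$, one finds that one may take $f(t)=e^{t}-1$ for $t\ge 0$ and $f(t)=e^{t}-e^{-(k-1)t}$ for $t<0$; this $f$ is continuous and satisfies $f(t)\le 0$ if and only if $t\le 0$. Choosing the comparison function $g(t)=e^{t}$, conditions (i)--(iii) of the Cheng--Yau principle are immediate: $g$ is increasing, $\liminf_{t\to\infty}f(t)/g(t)=1>0$, and $\int^{\infty}\bigl(\int^{t}e^{\tau}\,d\tau\bigr)^{-1/2}\,dt<\infty$ since the integrand decays like $e^{-t/2}$. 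Cheng--Yau then yields that $v$ is bounded from above and, since $f$ is continuous (hence lower semi-continuous), $f(\sup v)\le 0$; by the sign property of $f$ this forces $\sup v\le 0$, i.e. $w_1\le w$ on $\mathbb{C}$.

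Finally I would upgrade the weak inequality to the stated dichotomy via the strong maximum principle. Keeping the Euclidean $\triangle$ this time and applying the mean value theorem to $e^{w_1}-e^{w}$ and to $e^{-(k-1)w_1}-e^{-(k-1)w}$, one obtains $\triangle v=c(z)\,v$ with $c(z)=e^{\xi(z)}+(k-1)|\phi|^2e^{-(k-1)\eta(z)}\ge 0$, the nonnegativity being exactly the monotonicity $\partial F/\partial u>0$ of the nonlinearity. Since $v\le 0$ and $\triangle v-c(z)v=0$ with $c\ge 0$, the strong maximum principle applied at the supremum value $0$ shows that either $v<0$ everywhere, or $v$ attains its maximum $0$ at an interior point and hence $v\equiv 0$. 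This is precisely the asserted alternative $w_1<w$ or $w_1\equiv w$.

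The main obstacle is the second step. One has no a priori control on $\sup(w_1-w)$, so an ordinary maximum principle or the Omori--Yau principle (both of which presuppose boundedness from above) is not directly available, and the boundedness itself must be extracted. The hypothesis $|\phi|^2e^{-kw}\le 1$ is what makes this possible: it simultaneously bounds the curvature of $h$ from below (so that Cheng--Yau applies) and pins down an $a$-uniform lower bound $f$ with the correct exponential growth at $+\infty$, so that the Cheng--Yau integral test closes and the sign of $f$ converts $f(\sup v)\le 0$ into $\sup v\le 0$.
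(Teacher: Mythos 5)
Your proof is correct, and it follows the same overall framework as the paper: pass to $v=w_1-w$, rewrite the equation with respect to the complete background metric $e^w|dz|^2$ (whose curvature is bounded below thanks to $|\phi|^2e^{-kw}\le 1$), apply Cheng--Yau, and finish with the strong maximum principle. The one genuine difference is in how you extract $\sup v\le 0$. The paper bounds the right-hand side below by $f(t)=e^t-1-e^{-(k-1)t}$, which is negative at $t=0$, so Cheng--Yau only yields boundedness of $v$; the paper must then run a separate Omori--Yau argument at an almost-maximizing sequence to rule out $\sup v>0$. You instead choose the sharper, sign-dependent lower bound $f(t)=e^t-1$ for $t\ge 0$ (using $a\ge 0$) and $f(t)=e^t-e^{-(k-1)t}$ for $t<0$ (using $a\le 1$), which is continuous, vanishes exactly at $0$, and is positive precisely for $t>0$; a single application of Cheng--Yau then gives both boundedness and $f(\sup v)\le 0$, hence $\sup v\le 0$ directly. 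This eliminates the Omori--Yau step entirely and is a clean streamlining. Your final step is also slightly more careful than the paper's: writing $\triangle v=c(z)v$ with $c\ge 0$ via the mean value theorem makes explicit why the strong maximum principle applies, and your dichotomy (either $v<0$ everywhere or $v$ attains $0$ and is identically $0$) is stated correctly without assuming the supremum is attained.
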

\begin{proof} We first have
\begin{equation*}
\triangle(w_1-w)=(e^{w_1}-e^{w})-|\phi|^2(e^{-(k-1)w_1}-e^{-(k-1)w}).
\end{equation*}
Denote $\eta=w_1-w$, using the background metric $g=e^w|dz|^2$, we rewrite the above equation as
\begin{equation*}
\triangle_{g}\eta=(e^{\eta}-1)-|\phi|^2e^{-kw}(e^{-(k-1)\eta}-1)\geq (e^{\eta}-1)-e^{-(k-1)\eta},
\end{equation*}
where $\triangle_{g}=\frac{1}{e^w}\triangle$ and the inequality uses $|\phi|^2e^{-kw}\leq 1$.

 Since the metric $g=e^w|dz|^2$ is complete and the curvature $K_g=-\frac{1}{2}\triangle_gw\geq -\frac{1}{2}+\frac{1}{2}|\phi|^2e^{-kw}\geq -\frac{1}{2}$, we can apply the Cheng-Yau maximum principle. Choosing $f(t)=e^t-1-e^{-(k-1)t}$ and $g(t)=e^t$, one can check (i) $g$ is increasing; (ii) $\lim\inf_{t\rightarrow \infty}\frac{f(t)}{g(t)}=1$; (iii) $\int_0^{\infty}(\int_0^tg(\tau)d\tau)^{-\frac{1}{2}}dt<\infty$. Hence we obtain that $\eta$ is bounded from above. 

By the Omori-Yau maximum principle, since $\eta$ is bounded from above, there exists a sequence of points $\{x_m\}$ such that there exists $m_0$ such that for $m\geq m_0$,
$\triangle_{g}\eta (x_m)<\frac{1}{m}$, $\eta(x_m)\geq \sup \eta-\frac{1}{m}$.

Then at point $x_m$,
\begin{equation*}
\frac{1}{m}\geq (e^{\sup \eta-\frac{1}{m}}-1)-|\phi|^2e^{-kw}(x_m)(e^{-(k-1)(\sup \eta-\frac{1}{m})}-1)
\end{equation*}
Suppose $\sup\eta> 0$ and then $\sup \eta>\frac{1}{M}$ for some $M>0$. Then for $m>\max\{m_0,M\}$, 
the term $-|\phi|^2e^{-kw}(x_m)(e^{-(k-1)(\sup \eta-\frac{1}{m})}-1)$ is positive. Therefore
\begin{equation*}
\frac{1}{m}\geq e^{\sup \eta-\frac{1}{m}}-1.
\end{equation*}
As $m\rightarrow \infty$,
\begin{equation*}
0\geq e^{\sup \eta}-1.
\end{equation*}
Therefore, $\sup\eta\leq 0$, contradiction. 

So $\sup\eta\leq 0$ and then $w_1\leq w$. 

Consider the equation for $\eta$
\begin{equation*}
\triangle_{g}\eta=(e^{\eta}-1)-|\phi|^2e^{-kw}(e^{-(k-1)\eta}-1)
\end{equation*}

Applying the strong maximum principle to the equation for $\eta$. Since $\eta\leq 0$, then $\eta<0$ on $\mathbb{C}$ or $\eta\equiv 0$. The lemma follows.
\end{proof}

We are ready to show Theorem \ref{CompleteSolution}.

\begin{proof}(of Theorem \ref{CompleteSolution})
It suffices to show the uniqueness. Using Lemma \ref{weakunique}, it is enough to show that $|\phi|^2e^{-kw}\leq 1$.
Consider the function $h=\frac{|\phi|^2}{e^{kw}}$ and we calculate the equation for $\tau=\log (h+1)$.
Calculate $h_z=(\phi_z-kw_z\phi)\bar\phi e^{-kw}, h_{\bar z}=(\bar\phi_{\bar z}-k\bar w_{\bar z}\phi)\phi e^{-kw}$, and 
$h_{z\bar z}=-kw_{z\bar z}h+|\phi_z-kw_z\phi|^2e^{-kw}$.

Then \begin{eqnarray*}
\triangle\tau&=&4\frac{h_{z\bar z}(h+1)-h_zh_{\bar z}}{(h+1)^2}\\
&=&-4\frac{khw_{z\bar z}}{h+1}+4\frac{|\phi_z-kw_z\phi|^2e^{-kw}}{(h+1)^2}\\
&\geq&-4\frac{khw_{z\bar z}}{h+1}=-kh(e^w-|\phi|^2e^{-(k-1)w})e^{-\tau}\\
&=&-kh(1-|\phi|^2e^{-kw})e^{-\tau}e^w\\
&=&-k(e^{\tau}-1)(2-e^{\tau})e^{-\tau}e^w\\
&=& k(e^{\tau}-3+2e^{-\tau})e^w.
\end{eqnarray*}
Let $g=e^w|dz|^2$ be the new background metric on $\mathbb{C}$, the above equation is
\[\triangle_g\tau\geq k(e^{\tau}-3+2e^{-\tau}).\] Since the metric $e^w|dz|^2$ is complete and the curvature $K_g\geq -\frac{1}{2}$, we can apply the Cheng-Yau maximum principle. By choosing $f(t)=k(e^t-3+2e^{-t})$ and $g(t)=e^t$, one can check 
\\
(i) $g$ is increasing; (ii) $\lim\inf_{t\rightarrow \infty}\frac{f(t)}{g(t)}=1$; (iii) $\int_0^{\infty}(\int_0^tg(\tau)d\tau)^{-\frac{1}{2}}dt<\infty$.
\\
We then have that $\tau$ is bounded from above and moreover $f(\sup \tau)\leq 0$. Hence $e^{\sup \tau}\leq 2$ and then $h=\frac{|\phi|^2}{e^{kw}}\leq 1$.
\end{proof}

\begin{prop}\label{strictlynegative}
If $\phi$ is not a constant, then the unique solution $w$ to Equation (\ref{main}) such that $e^w|dz|^2$ is complete satisfies $|\phi|^2e^{-kw}<1$.
\end{prop}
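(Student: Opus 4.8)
The plan is to upgrade the inequality $h:=|\phi|^2e^{-kw}\le 1$ (equivalently $\tau:=\log(h+1)\le\log 2$), already obtained in the proof of Theorem~\ref{CompleteSolution}, to a strict pointwise inequality by a strong maximum principle argument followed by a rigidity step. I would argue by contradiction: suppose $h(z_0)=1$ at some point $z_0\in\mathbb{C}$, so that $\tau$ attains its supremum $\log 2$ at the interior point $z_0$. First I would propagate this equality to conclude $h\equiv 1$ on all of $\mathbb{C}$, and then show that $h\equiv 1$ forces $\phi$ to be constant, contradicting the hypothesis.

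For the propagation, I would recall from the proof of Theorem~\ref{CompleteSolution} the inequality $\triangle\tau\ge k\,\Phi(\tau)\,e^{w}$ with $\Phi(\tau)=e^{\tau}-3+2e^{-\tau}$. Putting $s=e^{\tau}$, one checks that $\Phi$ vanishes exactly at $\tau=0$ and $\tau=\log 2$, is strictly negative on $(0,\log 2)$, and satisfies $\Phi'(\log 2)=1>0$. Setting $v:=\log 2-\tau\ge 0$, the inequality becomes $\triangle v\le -k\,\Phi(\log 2-v)\,e^{w}=C(x)\,v$, where $C(x)=k\,e^{w}\bigl(-\Phi(\log 2-v)/v\bigr)\ge 0$ because $\Phi(\log 2-v)\le 0$ for $0\le v\le\log 2$, with $C(x)\to k\,e^{w}>0$ as $v\to 0$. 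Thus $v$ is a nonnegative supersolution of the elliptic operator $\triangle-C$ with $C\ge 0$, attaining the interior minimum value $0$ at $z_0$, and the strong maximum principle forces $v\equiv 0$ near $z_0$. Since $C$ is nonnegative in a neighborhood of \emph{every} point of $\{h=1\}$ (where $v=0$), the set $\{h=1\}$ is open as well as closed and nonempty, hence $h\equiv 1$ on $\mathbb{C}$.

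It then remains to derive a contradiction from $h\equiv 1$. In that case Equation~(\ref{main}) reads $\triangle w=e^{w}(1-h)=0$, so $w$ is harmonic, and the curvature satisfies $2K_g=h-1=0$, so $g=e^{w}|dz|^2$ is a \emph{complete, flat} conformal metric on $\mathbb{C}$. Writing the harmonic function $\tfrac12 w=\operatorname{Re}H$ for some entire $H$ and setting $G(z)=\int e^{H}$, one obtains $e^{w}|dz|^2=|G'|^2|dz|^2=G^{*}(|dw|^2)$, so $G$ is a local isometry of the complete surface $(\mathbb{C},g)$ into the Euclidean plane, hence a Riemannian covering map; as $\mathbb{C}$ is simply connected, $G$ is an affine biholomorphism $G(z)=az+b$. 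Then $G'\equiv a$, so $w$ is constant, $|\phi|^2=e^{kw}$ is constant, and $\phi$ is constant by the open mapping theorem, contradicting that $\phi$ is nonconstant. Therefore $h<1$ everywhere.

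The main obstacle will be the strong maximum principle step: since $\Phi$ vanishes only to first order at the maximal value $\tau=\log 2$, one must linearize $\Phi$ about $\log 2$ to exhibit a genuine nonnegative zeroth-order coefficient $C$ (and check that it stays bounded near $z_0$), and one must use that the supremum of $\tau$ is actually attained, at $z_0$, before propagating equality by the open–closed argument. The concluding rigidity input—that a complete flat conformal metric on $\mathbb{C}$ has constant conformal factor—is standard but should be invoked explicitly.
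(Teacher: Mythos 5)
Your proof is correct, and its core is the same as the paper's: a strong maximum principle applied to a function of $h=|\phi|^2e^{-kw}$, yielding the dichotomy $h<1$ everywhere or $h\equiv 1$. The differences are in the details and are worth noting. The paper works with $\sigma=\log h$ (defined away from the zeros of $\phi$), for which one gets the \emph{exact} autonomous equation $\triangle_g\sigma=k(e^{\sigma}-1)$ with monotone right-hand side, so the strong maximum principle applies in one line with no linearization; you instead reuse $\tau=\log(h+1)$ and the differential \emph{inequality} from the proof of Theorem \ref{CompleteSolution}, which forces you to linearize $\Phi$ at $\tau=\log 2$ and check that the zeroth-order coefficient $C$ is nonnegative and locally bounded --- more work, but it has the advantage that $\tau$ is globally defined even at zeros of $\phi$. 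The more substantive difference is that you actually close the argument: the paper stops at ``either $\sigma<0$ or $\sigma\equiv 0$'' and leaves implicit why $h\equiv 1$ contradicts $\phi$ nonconstant, whereas you carry out the rigidity step (if $h\equiv 1$ then $w$ is harmonic, $e^w|dz|^2$ is a complete flat conformal metric on $\mathbb{C}$, the developing map $G=\int e^{H}$ is a global isometry onto the Euclidean plane, hence affine, so $w$ and then $\phi$ are constant). That step, or an appeal to Lemma \ref{PolynomialComplete} with $e^w=|\phi|^{2/k}$, is genuinely needed, so your version is the more self-contained of the two.
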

\begin{proof}
Consider the function $\sigma=\log (\frac{|\phi|^2}{e^{kw}})$ which is well-defined away from the zeros of $\phi$. We have the equation for $\sigma$
  \[\triangle_g\sigma=k(|\phi|^2e^{-kw}-1)=k(e^{\sigma}-1).\]
Since the right hand side is monotone increasing with respect to $\sigma$, one can apply the strong maximum principle to this equation.
Since $\sigma\leq 0$, then either $\sigma<0$ on $\mathbb{C}$ or $\sigma\equiv 0$. 
\end{proof}

\section{Discussion on the uniqueness}
In this section, we consider the uniqueness separately for the case $\phi$ is a polynomial and the case $\phi$ is not a polynomial. 

Firslty, we restrict ourselves to work with polynomials and show the uniqueness of solutions to Equation (\ref{main}) without any requirement. 
\begin{thm}\label{Unique}
If $\phi$ be a polynomial function on $\mathbb{C}$, then there is a unique solution to Equation (\ref{main}) on $\mathbb{C}$.
\end{thm}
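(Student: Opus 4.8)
The plan is to show that the unique complete solution $w$ from Theorem \ref{CompleteSolution} is in fact the only solution. By Lemma \ref{weakunique} (whose hypotheses $w$ satisfies, since $|\phi|^2e^{-kw}\le 1$) any other $C^{\infty}$ solution $w_1$ obeys either $w_1\equiv w$ or $w_1<w$ everywhere; so I assume $w_1<w$, set $\eta:=w-w_1>0$ (a smooth, everywhere-positive function on $\mathbb{C}$), and aim for a contradiction. Working with respect to the complete background $g=e^w|dz|^2$ and writing $\sigma:=\log(|\phi|^2e^{-kw})\le 0$, subtracting the two copies of (\ref{main}) gives
\[\triangle_g\eta=(1-e^{-\eta})+e^{\sigma}\bigl(e^{(k-1)\eta}-1\bigr),\]
in which both summands are nonnegative because $\eta>0$. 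Granting for the moment that $\eta$ is bounded above, the Omori--Yau principle (applicable since $g$ is complete with $K_g\ge-\tfrac12$) produces $x_m$ with $\eta(x_m)\to\sup\eta$ and $\triangle_g\eta(x_m)\le\tfrac1m$; dropping the second nonnegative term yields $\tfrac1m\ge 1-e^{-\eta(x_m)}\to 1-e^{-\sup\eta}$, forcing $\sup\eta\le0$, i.e. $\eta\equiv0$. Thus everything reduces to the single a priori estimate that $\eta$ is bounded above.

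To bound $\eta$ I feed the differential inequality into the Cheng--Yau principle, for which I need the genuinely super-exponential reaction term $e^{\sigma}(e^{(k-1)\eta}-1)$, hence a positive lower bound $e^{\sigma}\ge c_0$. This fails at the (finitely many, hence compactly contained) zeros of $\phi$, but there $\eta$ is automatically bounded by some $C_1$ by continuity; choosing a ball $B_R$ containing all zeros and setting $A:=c_0(e^{(k-1)C_1}-1)$, the inequality $\triangle_g\eta\ge \hat f(\eta):=c_0(e^{(k-1)\eta}-1)-A$ then holds on all of $\mathbb{C}$ (on $\{|z|\ge R\}$ from the lower bound on $e^{\sigma}$, and on $\overline{B_R}$ because there $\hat f(\eta)\le 0\le\triangle_g\eta$). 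Since $\hat f$ satisfies the Cheng--Yau hypotheses with $g(t)=e^{(k-1)t}$, the function $\eta$ is bounded above, which is exactly what the Omori--Yau step consumed.

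The crux, and the only place where the hypothesis that $\phi$ is a \emph{polynomial} enters, is the lower bound $e^{\sigma}\ge c_0>0$ on $\{|z|\ge R\}$, equivalently that $u:=w-\tfrac2k\log|\phi|=-\sigma/k\ge0$ stays bounded near infinity. I would prove this in the natural flat coordinate: away from the zeros the conformal metric $g_0:=|\phi|^{2/k}|dz|^2$ is flat, and a direct computation (using that $\log|\phi|$ is harmonic there) turns (\ref{main}) into the \emph{autonomous} equation $\triangle_{g_0}u=e^{u}-e^{-(k-1)u}\ge e^{u}-1$. Because $\phi$ is a polynomial of degree $d$, one has $|\phi|^{2/k}\sim|z|^{2d/k}$, so $g_0$ is complete toward infinity and the $g_0$-distance from a point to $\{|z|=R\}$ tends to $\infty$ as $|z|\to\infty$; the Keller--Osserman interior estimate for the super-exponential reaction $e^{u}-1$ (a radial barrier, equivalently Cheng--Yau applied on large flat balls, passing to the local universal cover to dispose of the branching of $\int\phi^{1/k}$) then bounds $u$ uniformly deep inside, giving $u\le C$, i.e. $e^{\sigma}\ge e^{-kC}$, near infinity. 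I expect this asymptotic estimate to be the main obstacle: it is precisely what fails for a non-polynomial $\phi=Pe^{Q}$, where in directions with $\mathrm{Re}\,Q\to-\infty$ the metric $g_0$ stays short, those ends sit at finite $g_0$-distance from the boundary, and no uniform bound on $u$ holds --- which is exactly the mechanism behind the second, incomplete solution of Theorem \ref{mainmainmain1}. (The degenerate case of constant $\phi$ is immediate, since then $\sigma\equiv0$.)
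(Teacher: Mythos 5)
Your argument is correct in outline, but it follows a genuinely different --- and considerably heavier --- route than the paper. The paper never compares an arbitrary solution to the complete one: it shows directly that \emph{any} solution $w_1$ of (\ref{main}) with polynomial $\phi$ is bounded below, by applying Cheng--Yau to $-w_1$ against the \emph{flat} background $|dz|^2$ (complete, zero curvature), using only that $|\phi|^2\ge 1$ outside a compact set --- the sole place the polynomial hypothesis enters. A lower bound on $w_1$ makes $e^{w_1}|dz|^2$ dominate a multiple of the Euclidean metric, hence complete, and Theorem \ref{CompleteSolution} finishes. Your route instead fixes the complete solution $w$, uses Lemma \ref{weakunique} to reduce to excluding $\eta=w-w_1>0$, and runs a Cheng--Yau/Omori--Yau argument on $\eta$ in the metric $e^w|dz|^2$; the algebra there (the identity for $\triangle_g\eta$, the truncated reaction term $\hat f$, the Omori--Yau contradiction) all checks out. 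But this consumes a much stronger input: the upper asymptotic $e^w\le C|\phi|^{2/k}$ near infinity (your $e^{\sigma}\ge c_0$), which you only sketch via Keller--Osserman in the multivalued flat coordinate $\int\phi^{1/k}\,dz$. That estimate is true and your sketch is the standard way to prove it, but note it is already available inside the paper: Lemma \ref{Another} produces a solution with $|\phi|^{2/k}\le e^{w}\le a|\phi|^{2/k}$ outside a ball, Lemma \ref{PolynomialComplete} shows that solution is complete precisely when $\phi$ is a polynomial, and Theorem \ref{CompleteSolution} then identifies it with $w$; citing these would close your proof without any barrier construction. In the end the paper's lower-bound argument buys the same theorem with no asymptotic analysis at all, while your comparison argument has the (mild) virtue of making explicit the mechanism --- failure of the bound $e^{\sigma}\ge c_0$ along directions where $\mathrm{Re}\,Q\to-\infty$ --- by which uniqueness breaks for $\phi=Pe^{Q}$ in Theorem \ref{mainmainmain1}.
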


\begin{proof} The existence is shown in the previous section. It suffices to show the uniqueness. We first show that $w$ has a lower bound. 
Since $\phi$ is a polynomial, there exists a large enough $R$ such that outside the ball $B_R$, $|\phi|^2\geq 1$. Outside $B_R$, we have the inequality 
\begin{equation*}
\triangle w\leq e^w-e^{-(k-1)w}.
\end{equation*}

Denote $B=\max_{B_R} (-w)$ and define the function $f(t)$ on $\mathbb{R}$ as: for $t\leq B, f=-e^{-t}$; for $t>B, f=e^{(k-1)t}-e^{-t}$. Therefore we have 
\begin{equation*}
\triangle(-w)\geq f(-w).
\end{equation*} Choose $g(t)=e^{(k-1)t}$, one can check 
\\
(i) $g$ is increasing; (ii) $\lim\inf_{t\rightarrow \infty}\frac{f(t)}{g(t)}=1$; (iii) $\int_0^{\infty}(\int_0^tg(\tau)d\tau)^{-\frac{1}{2}}dt<\infty$.
\\
Since the background metric is complete and has curvature $0$, we can apply the Cheng-Yau maximum principle. Hence we have $\sup -w<\infty$ and $w$ is bounded from below. Then the metric $e^{w}|dz|^2$ is complete. By Theorem \ref{CompleteSolution}, the solution is unique.
\end{proof}

Next, we study the case when $\phi$ is a non-polynomial entire function with finite zeros. In this case we show that Equation (\ref{main}) has at least two distinct solutions. 
\begin{thm}\label{NonUnique}
Let $\phi$ be an entire function on $\mathbb{C}$ with finite zeros. If $\phi$ is not a polynomial, then Equation (\ref{main}) has at least two solutions $w_1>w_2$ where $e^{w_1}|dz|^2$ defines a complete metric and $e^{w_2}|dz|^2$ defines an incomplete metric.  Moreover, $|\phi|^2e^{-kw_1}< 1$ and $|\phi|^2e^{-kw_2}\leq 1$.
\end{thm}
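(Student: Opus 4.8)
The plan is to establish non-uniqueness by constructing one complete solution and one genuinely incomplete solution. The complete solution $w_1$ already exists and is unique by Theorem \ref{CompleteSolution}, and by Proposition \ref{strictlynegative} it satisfies the strict inequality $|\phi|^2 e^{-kw_1} < 1$ since a non-polynomial entire function is certainly nonconstant. So the entire content of the theorem lies in producing a second solution $w_2$ with $w_2 < w_1$ whose associated metric $e^{w_2}|dz|^2$ is \emph{incomplete}. The strategy I would pursue is again the supersolution--subsolution method (Lemma \ref{supersub}), but now arranged so that the resulting solution is trapped below $w_1$ and is forced to be incomplete. The natural supersolution is $w_+ = w_1$ itself, so the real task is to build a subsolution $w_-$ lying strictly below $w_1$ such that the metric built from $w_-$ (and hence from any solution squeezed between $w_-$ and $w_1$) has finite distance to infinity along some divergent path.

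The key geometric input I would exploit is that $\phi$ is non-polynomial with finite zeros, so $\phi = P e^{Q}$ with $Q$ nonconstant, and thus $|\phi|$ grows faster than any polynomial along suitable rays to infinity (and may decay along others). The natural candidate subsolution is the ``explicit'' function $\tfrac{2}{k}\log|\phi|$, for which one checks directly that $\triangle\bigl(\tfrac{2}{k}\log|\phi|\bigr) = 0$ away from the zeros of $\phi$, while $e^{\frac{2}{k}\log|\phi|} - |\phi|^2 e^{-(k-1)\cdot\frac{2}{k}\log|\phi|} = |\phi|^{2/k} - |\phi|^{2/k} = 0$ there, so $\tfrac{2}{k}\log|\phi|$ is harmonic and hence a (weak) subsolution wherever it is smooth; near the zeros of $\phi$ it tends to $-\infty$ and remains a subsolution in the distributional sense because $\log|\phi|$ is subharmonic. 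The first step, then, is to verify that $w_- := \tfrac{2}{k}\log|\phi|$ is a global weak subsolution and that $w_- \le w_+ = w_1$ on all of $\mathbb{C}$; the latter is exactly the inequality $|\phi|^2 e^{-kw_1} \le 1$ from $w_1$, i.e. $\tfrac{2}{k}\log|\phi| \le w_1$. Applying Lemma \ref{supersub} then yields a smooth solution $w_2$ with $\tfrac{2}{k}\log|\phi| \le w_2 \le w_1$.

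Next I would prove the two required features of $w_2$. For the strict separation $w_2 < w_1$: since $w_2 \le w_1$ and both solve Equation (\ref{main}), the difference satisfies an elliptic equation of the type analyzed in Lemma \ref{weakunique}, so by the strong maximum principle either $w_2 < w_1$ everywhere or $w_2 \equiv w_1$; I would rule out equality by showing the forced solution actually touches the barrier $\tfrac{2}{k}\log|\phi|$ somewhere or by a growth/completeness contradiction (if $w_2 \equiv w_1$ it would be complete, contradicting the incompleteness established below). The inequality $|\phi|^2 e^{-kw_2} \le 1$ is immediate from $w_2 \ge \tfrac{2}{k}\log|\phi|$, which rearranges to exactly $|\phi|^2 e^{-kw_2} \le 1$. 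The main obstacle, and the heart of the argument, is establishing \textbf{incompleteness} of $e^{w_2}|dz|^2$: I would use the upper bound $w_2 \le w_1$ is the wrong direction, so instead I would sandwich using the explicit subsolution's growth. Because $w_2 \ge \tfrac{2}{k}\log|\phi|$ forces the metric to be large where $|\phi|$ is large, the incompleteness must come from the \emph{upper} control; the right approach is to find a divergent path $\alpha$ to infinity along which $\int_\alpha e^{w_2/2}|dz| < \infty$, using that along this path $w_2$ is controlled from above in a way making the metric integrable.

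Concretely, I expect the decisive estimate to be an \emph{upper} bound on $w_2$ along some ray. Here the assumption that $\phi$ is non-polynomial is essential: along a ray where $|Q|$, and hence $|\phi| = |P|e^{\mathrm{Re}\,Q}$, decays rapidly (e.g. $\mathrm{Re}\,Q \to -\infty$), the nonlinear term $|\phi|^2 e^{-(k-1)w}$ becomes negligible and Equation (\ref{main}) behaves like $\triangle w \approx e^w$, whose complete-metric solutions grow, but whose \emph{minimal} solutions can decay; I would build a comparison barrier from above on such a ray (for instance comparing with the flat-model solution $w \equiv 0$ shifted, or with the solution associated to $\phi \equiv 0$, namely the solution of $\triangle w = e^w$ that yields an incomplete metric) to force $e^{w_2/2}$ to decay fast enough along $\alpha$ that the length integral converges. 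Making this upper barrier rigorous --- choosing the correct ray dictated by the direction in which $\mathrm{Re}\,Q \to -\infty$ and verifying the supersolution inequality for the comparison function on a neighborhood of that ray --- is the step I expect to be the main difficulty, since it is precisely where the distinction between polynomial $\phi$ (where no such decaying direction forces incompleteness, consistent with Theorem \ref{Unique}) and non-polynomial $\phi$ (where the essential singularity at infinity supplies such a direction) enters the argument.
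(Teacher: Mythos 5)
Your setup (take $w_1$ to be the complete solution from Theorem \ref{CompleteSolution}, get $|\phi|^2e^{-kw_1}<1$ from Proposition \ref{strictlynegative}, and build a second solution by sub/supersolutions with subsolution $\frac{2}{k}\log|\phi|$) starts out parallel to the paper, but the choice of supersolution $w_+=w_1$ is where the argument breaks. With that barrier, Lemma \ref{supersub} only produces \emph{some} solution between $\frac{2}{k}\log|\phi|$ and $w_1$, and nothing prevents it from being $w_1$ itself; more importantly, the only upper bound you retain on $w_2$ is $w_2\le w_1$, which, as you yourself note, points the wrong way for incompleteness. Your plan to break the circle (``if $w_2\equiv w_1$ it would be complete, contradicting the incompleteness established below'') never closes, because the incompleteness is exactly what you cannot establish without an upper bound on $w_2$ by something incomplete. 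The paper's fix is a \emph{different} supersolution: outside a large disk it takes $w_+=\frac{2}{k}\log|\phi|+c$, patched with the solution $u_1$ of Proposition \ref{hyperbolicdomain} near the zeros of $\phi$ (which also repairs the fact that $\frac{2}{k}\log|\phi|$ fails to be in $C^0\cap H^1_{loc}$ at the zeros, a secondary gap in your subsolution). This traps the solution so that $|\phi|^{2/k}\le e^{w_2}\le a|\phi|^{2/k}$ for $|z|\ge R$ (Lemma \ref{Another}).

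The second missing ingredient is the incompleteness criterion itself. Your heuristic --- find a ray along which $\mathrm{Re}\,Q\to-\infty$ and compare with a decaying barrier --- is not what the paper does and would be hard to make rigorous: $\mathrm{Re}\,Q$ need not tend to $-\infty$ along any ray for a general nonconstant entire $Q$, and controlling the PDE solution from above along a single ray requires precisely the global upper bound you lack. Instead, once $e^{w_2}\le a|\phi|^{2/k}$ off a compact set, incompleteness of $e^{w_2}|dz|^2$ reduces to exhibiting one divergent path of finite length in the flat metric $|\phi|^{2/k}|dz|^2$, and this is Lemma \ref{PolynomialComplete}: by Osserman's argument, if every divergent path had $\int_C|\phi|^{1/k}|dz|=\infty$, then the primitive $F(z)=\int z^Ne^{G(z)}dz$ would admit a globally invertible $(N+1)$-st root, forcing $G$ to be constant and $\phi$ to be a polynomial. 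This is the precise point where non-polynomiality enters, and it is absent from your proposal. The remaining pieces of your argument --- $|\phi|^2e^{-kw_2}\le 1$ from the subsolution bound, and $w_2<w_1$ from Lemma \ref{weakunique} once $w_2\neq w_1$ is known --- are fine.
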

 
The main idea is that we construct a solution using another pair of a supersolution and a subsolution in the following Lemma \ref{Another} if $\phi$ is with finite zeros. When the differential $\phi$ is not a polynomial, the solution constructed here differs from the one constructed in Theorem \ref{CompleteSolution}.  
\begin{lem}\label{Another}
Let $\phi$ be a nonzero entire function on $\mathbb{C}$ with finite zeros, there is a $C^{\infty}$ solution $w$ to Equation (\ref{main})
on $\mathbb{C}$ such that there are constants $R>0, a>1$ such that for $|z|\geq R, $\[|\phi|^{\frac{2}{k}}\leq e^w\leq a|\phi|^{\frac{2}{k}}.\]
\end{lem}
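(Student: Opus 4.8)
The plan is to produce the solution by the sub/supersolution method (Lemma \ref{supersub}), engineering the two barriers so that outside a large ball they pinch the function $\frac{2}{k}\log|\phi|$ from both sides. Since $\phi$ has only finitely many zeros, fix $R_0$ so that all zeros lie in $B_{R_0}$. The natural subsolution is $w_-=\frac{2}{k}\log|\phi|$. Where $\phi\neq 0$ this is harmonic and a direct substitution gives $e^{w_-}=|\phi|^2e^{-(k-1)w_-}=|\phi|^{2/k}$, so $\triangle w_-=0=F(\cdot,w_-)$ with $F(x,u)=e^u-|\phi|^2e^{-(k-1)u}$; at a zero of order $m$ the distributional Laplacian of $\log|\phi|$ contributes a positive multiple of a Dirac mass, so $\triangle w_-\ge 0=F(\cdot,w_-)$ weakly. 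Hence $w_-$ is a global weak subsolution, and $w\ge w_-$ will give the lower bound $|\phi|^{2/k}\le e^{w}$.

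For the supersolution, the obvious candidate $\frac{2}{k}\log|\phi|+\log a$ (with $a>1$) satisfies $\triangle(\cdot)=0$ while $F(\cdot,\cdot)=(a-a^{1-k})|\phi|^{2/k}>0$ wherever $\phi\neq 0$, so it \emph{is} a supersolution away from the zeros; but at the zeros the same positive Dirac masses make its distributional Laplacian positive and destroy the supersolution inequality. I would fix this by absorbing all the zeros into a genuine local solution. Choose radii $R_0<\rho_1<\rho_2$ and let $u_1$ be the complete solution on $B_{\rho_2}$ furnished by Proposition \ref{hyperbolicdomain}; it is smooth and an honest supersolution on all of $B_{\rho_2}$ (the zeros are harmless for the actual solution), it satisfies $e^{u_1}\ge|\phi|^{2/k}$, and since $e^{u_1}|dz|^2$ dominates the Poincar\'e metric one has $u_1\to+\infty$ at $\partial B_{\rho_2}$. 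Then I would pick $a>1$ so large that $\log a>\max_{|z|=\rho_1}\bigl(u_1-\frac{2}{k}\log|\phi|\bigr)$ and set
\[
w_+=\begin{cases} u_1, & |z|\le \rho_1,\\ \min\{u_1,\ \tfrac{2}{k}\log|\phi|+\log a\}, & \rho_1\le|z|\le\rho_2,\\ \tfrac{2}{k}\log|\phi|+\log a, & |z|\ge \rho_2.\end{cases}
\]

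On the annulus $\rho_1\le|z|\le\rho_2$ both functions are supersolutions (no zeros there), so their minimum is a weak supersolution, exactly as in the existence proof above. The choice of $a$ forces the minimum to equal $u_1$ near $|z|=\rho_1$, and, since $u_1\to\infty$, to equal $\frac{2}{k}\log|\phi|+\log a$ near $|z|=\rho_2$; hence $w_+$ is continuous and is a weak supersolution across both interfaces. The key structural point is that the region where $w_+$ actually uses the logarithmic barrier $\frac{2}{k}\log|\phi|+\log a$ is zero-free, so no spurious Dirac mass appears. Finally $w_-\le w_+$ is immediate, because $u_1\ge\frac{2}{k}\log|\phi|$ and $a>1$ give $w_+\ge\frac{2}{k}\log|\phi|=w_-$ pointwise.

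With $w_-\le w_+$ in hand, Lemma \ref{supersub} yields a $C^\infty$ solution $w$ with $w_-\le w\le w_+$. Taking $R=\rho_2$, for $|z|\ge R$ the bounds read $\frac{2}{k}\log|\phi|\le w\le \frac{2}{k}\log|\phi|+\log a$, that is $|\phi|^{2/k}\le e^{w}\le a|\phi|^{2/k}$, which is the assertion. I expect the main obstacle to be precisely this supersolution construction: the clean candidate fails at the zeros of $\phi$, so the whole art is to swallow those zeros inside a genuine local solution while confining the logarithmic barrier to the zero-free region, and then to glue the two along an annulus, using the boundary blow-up of $u_1$ together with a sufficiently large $a$ to secure continuity and the weak supersolution property across the interfaces.
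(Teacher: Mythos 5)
Your supersolution is exactly the paper's construction: swallow the zeros of $\phi$ inside the genuine local solution $u_1$ on a ball $B_{\rho_2}$ furnished by Proposition \ref{hyperbolicdomain}, glue it along a zero-free annulus to the barrier $\frac{2}{k}\log|\phi|+\log a$ using the boundary blow-up of $u_1$ and a large enough constant, and take the minimum of two supersolutions on the overlap. That part is correct and is what the paper does (with $c$ in place of $\log a$).

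The gap is in your subsolution. You take $w_-=\frac{2}{k}\log|\phi|$ globally and argue that the positive Dirac masses at the zeros only help the inequality $\triangle w_-\ge F(\cdot,w_-)$. But Lemma \ref{supersub} as stated requires $w_-\in C^0(\mathbb{C})\cap H^1_{loc}(\mathbb{C})$, and $\frac{2}{k}\log|\phi|$ satisfies neither condition at a zero of $\phi$: it equals $-\infty$ there, and near a zero of order $m$ one has $|\nabla\log|\phi||^2\sim m^2/|z-z_0|^2$, which is not locally integrable in dimension two, so the function is not in $H^1_{loc}$. Hence the lemma you invoke does not apply to your pair $(w_-,w_+)$. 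The fix is the same device you already used for the supersolution, and is what the paper does: truncate the subsolution near the zeros by setting $w_-=\max\{u_1-c,\ \frac{2}{k}\log|\phi|\}$ on a ball containing them (with $c$ chosen so that $u_1-c\le \frac{2}{k}\log|\phi|$ on the gluing annulus) and $w_-=\frac{2}{k}\log|\phi|$ outside. This $w_-$ is continuous and locally $H^1$, is a maximum of two subsolutions hence a weak subsolution, still satisfies $w_-\le w_+$ (using $|\phi|^{2/k}\le e^{u_1}$ from Proposition \ref{hyperbolicdomain}), and still equals $\frac{2}{k}\log|\phi|$ outside a compact set, so the desired two-sided bound for $|z|\ge R$ is unaffected.
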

\begin{proof}(of Lemma \ref{Another})
If $\phi$ has no zero, then $w=\frac{2}{k}\log |\phi|$ is a solution.    \\
If $\phi$ has finite zeros, we can construct a supersolution and a subsolution. 
Since $\phi$ has finite zeros, choose $R_2$ such that the ball $B_{R_2}=\{|z|<R_2\}$ contains all zeros of $\phi$. Choose $R_2<R_2'<R_1'<R_1$. Let $u_1$ be the unique solution on the domain $B_{R_1}$ defined in Proposition \ref{hyperbolicdomain}.

On the annulus $\{R_2'\leq |z|\leq R_1'\}$, $u_1$ has a maximum and $\frac{2}{k}\log |\phi|$ has a minimum using the fact that the zeros of $\phi$ are all inside $B_{R_2}$. Then there exists a constant $c>0$ such that 
\[\frac{2}{k}\log|\phi|\geq u_1(z)-c\] holds on the annulus  $\{R_2'\leq |z|\leq R_1'\}$.

We first construct the supersolution $w_{+}$ as follows:\\
on $\{|z|<R_2'\}$, let $w_+=u_1$;\\
on the annulus $\{R_2'\leq |z|\leq R_1\}$, let $w_{+}=\min\{u_1, \frac{2}{k} \log |\phi|+c\}$;\\
outside $B_{R_1}$, let $w_{+}=\frac{2}{k} \log |\phi|+c$.\\
On the annulus $\{R_2'\leq|z|\leq R_1'\}$, $u_1\leq \frac{2}{k}\log|\phi|+c$. Therefore, in the neighbourhood of $\partial B_{R_2'}$, $w_+=u_1$ is continuous. 
Since the metric $e^{u_1}|dz|^2$ dominates the hyperbolic metric on $B_{R_1}$, $u_1$ blows up on $\partial B_{R_1}$. Then in the neighbourhood of $\partial B_{R_1}$, $w_+=\frac{2}{k} \log |\phi|+c$ is continuous. 
Both $u_1$ and $\frac{2}{k}\log |\phi|+c$ are supersolutions, so their minimum is still a supersolution in the weak sense. Therefore $w_+\in C^0(\mathbb{C})\cap H^1_{loc}(\mathbb{C})$ is again a supersolution. 

Secondly, we construct the subsolution $w_{-}$ as follows:\\
on $\{|z|<R_1'\}$, let $w_{-}=\max\{u_1-c, \frac{2}{k} \log |\phi|\}$;\\
outside $B_{R_1'}$, let $w_{-}=\frac{2}{k} \log |\phi|$.\\
On the annulus $\{R_2'\leq|z|\leq R_1'\}$, $u_1-c\leq \frac{2}{k}\log|\phi|$. Then in the neighbourhood of $\partial B_{R_1'}$, $w_{-}=\frac{2}{k} \log |\phi|$ is continuous. Both $u_1-c$ and $\frac{2}{k}\log |\phi|$ are subsolutions, so their maximum is still a subsolution in the weak sense. Therefore $w_-\in C^0(\mathbb{C})\cap H^1_{loc}(\mathbb{C})$ is again a subsolution. 

One can check that $w_-\leq w_+$ as follows:\\
on $B_{R_2'}$, $w_-=\max\{u_1-c, \frac{2}{k} \log |\phi|\}\leq u_1=w_+$, using $\frac{|\phi|^{\frac{2}{k}}}{e^{u_1}}<1$;\\
on the annulus $\{R_2'\leq |z|\leq R_1'\}$, \\$w_{-}=\max\{u_1-c, \frac{2}{k} \log |\phi|\}\leq w_{+}=\min\{u_1, \frac{2}{k} \log |\phi|+c\}$;\\
on the annulus $\{R_1'\leq |z|\leq R_1\}$, \\$w_{-}= \frac{2}{k} \log |\phi|\leq w_{+}=\min\{u_1, \frac{2}{k} \log |\phi|+c\}$, using $\frac{|\phi|^{\frac{2}{k}}}{e^{u_1}}<1$;\\
outside $B_{R_1}$, let $w_{-}=\frac{2}{k} \log |\phi|\leq w_{+}=\frac{2}{k} \log |\phi|+c$.

Applying Lemma \ref{supersub}, there is a $C^{\infty}$ solution $w$ satisfying $w_{-}\leq w\leq w_+$. Note when $|z|>R_1'$, we have $\frac{2}{k} \log |\phi|=w_-\leq w\leq w_+=\frac{2}{k} \log |\phi|+c.$
\end{proof}

The following lemma follows from Lemma 9.6 in \cite{Osserman}. For readers' convenience, we include the argument here.
\begin{lem}\label{PolynomialComplete} Let $\phi$ be a nonzero entire function on $\mathbb{C}$ with finite zeros, the solution $w$ in Lemma \ref{Another} defines a complete metric $e^w|dz|^2$ if and only if $\phi$ is a polynomial.
\end{lem}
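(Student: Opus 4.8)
The plan is to first reduce the completeness of $e^w|dz|^2$ to that of the flat model metric $|\phi|^{2/k}|dz|^2$, and then to analyze the latter according to whether $\phi$ is a polynomial. By Lemma \ref{Another} there are constants $R>0,\,a>1$ with $|\phi|^{1/k}\le e^{w/2}\le \sqrt a\,|\phi|^{1/k}$ for $|z|\ge R$, so the two conformal densities are comparable outside $B_R$ and the lengths of any path tending to $\infty$ differ only by the factor $\sqrt a$. Since a conformal metric on $\mathbb C$ is complete precisely when every divergent path has infinite length (Hopf--Rinow), it therefore suffices to decide completeness of $|\phi|^{2/k}|dz|^2$, whose density is $|\phi|^{1/k}$ and which is flat on $\{|z|>R\}$ because $\tfrac1k\log|\phi|$ is harmonic away from the zeros of $\phi$.

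For the ``if'' direction, suppose $\phi$ is a polynomial. Then $|\phi(z)|^{1/k}$ is bounded below by a positive constant $c$ once $|z|$ is large (it tends to $\infty$ if $\deg\phi\ge 1$, and is a positive constant if $\phi$ is a nonzero constant). Hence for any path $\alpha$ diverging to $\infty$ the tail has infinite Euclidean length, and $\int_\alpha |\phi|^{1/k}|dz|\ge c\int_{\alpha,\,\mathrm{tail}}|dz|=\infty$. Thus every divergent path has infinite length and the metric is complete.

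For the ``only if'' direction I would prove the contrapositive: if $\phi$ is not a polynomial, then the metric is incomplete, i.e.\ there is a divergent path of finite length (its points then form a nonconvergent Cauchy sequence). Write $\phi=Pe^Q$ with $P$ a polynomial and $Q$ a nonconstant entire function, and set $u=\mathrm{Re}\,Q$, a nonconstant harmonic function on $\mathbb C$; since a harmonic function on $\mathbb C$ bounded below must be constant (Liouville), $u$ is unbounded below. The goal is a divergent path $\gamma$ along which $u\to-\infty$ fast enough that
\[
\int_\gamma |\phi|^{1/k}|dz|=\int_\gamma |P|^{1/k}e^{u/k}|dz|<\infty ,
\]
which by the comparison above makes $e^w|dz|^2$ incomplete. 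When $Q$ is a polynomial of degree $d\ge 1$ this is elementary: along a ray $z=re^{i\theta_0}$ in a direction $\theta_0$ for which the leading term of $\mathrm{Re}\,Q$ is negative one has $u=-c\,r^d+O(r^{d-1})\to-\infty$, so $e^{u/k}$ decays exponentially and dominates the polynomial factor $|P|^{1/k}$, giving a convergent integral. For transcendental $Q$ I would instead use an asymptotic tract: since $e^Q$ omits the value $0$, by Iversen's theorem $0$ is an asymptotic value, so there is an unbounded component of $\{\mathrm{Re}\,Q<c_0\}$ on which $\mathrm{Re}\,Q\to-\infty$; following the steepest-descent curve of $u$ (equivalently a level curve of $\mathrm{Im}\,Q$) into this tract and changing variables by $w=Q(z)$ turns the length integral into one carrying an $e^{\mathrm{Re}(w)/k}$ weight, whose exponential decay forces convergence. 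This is exactly the content of Lemma 9.6 in \cite{Osserman}.

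The main obstacle is the transcendental case. The decay $\mathrm{Re}\,Q\to-\infty$ along a tract is not by itself enough: one must also control the arc length against this decay, i.e.\ bound the growth of $|z|$ (hence of $|P|^{1/k}$) and keep $|Q'|$ from degenerating along the chosen curve, so that the weighted integral genuinely converges. Establishing that a suitable tract with these quantitative properties exists is the crux, and is the step for which I would lean on the asymptotic-value analysis behind Osserman's Lemma 9.6.
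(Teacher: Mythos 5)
Your reduction to the flat metric $|\phi|^{2/k}|dz|^2$ via Lemma \ref{Another} and your ``if'' direction match the paper. The gap is exactly where you locate it yourself: in the ``only if'' direction for transcendental $Q$ you never actually produce a divergent path of finite length, and the route you sketch (Iversen's theorem gives an asymptotic tract where $\mathrm{Re}\,Q\to-\infty$; integrate along a steepest-descent curve) does not close without substantial further work. An asymptotic path to the value $0$ only guarantees that $\mathrm{Re}\,Q\to-\infty$ eventually; it gives no control on the arc length spent where $\mathrm{Re}\,Q$ is only moderately negative, nor on $|Q'|$ along the level curve of $\mathrm{Im}\,Q$ you propose to follow (such a curve can approach critical points of $Q$, where the change of variables $w=Q(z)$ degenerates). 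So convergence of the weighted length integral is not established, and ``lean on the analysis behind Osserman's Lemma 9.6'' is a citation standing in for the missing argument rather than a proof.

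What the paper (following Osserman) actually does is different and avoids tract analysis entirely. Assume completeness, so $\int_C|\phi|^{1/k}|dz|=\infty$ for every divergent path $C$. Bound $|\phi|^{1/k}$ by a multiple of $|z^Ne^{G(z)}|$ (after rescaling $G$), set $F(z)=\int_0^z t^Ne^{G(t)}\,dt$, which vanishes to order $N+1$ at $0$, and take a local branch $\zeta(z)=F(z)^{1/(N+1)}$ near $0$ with local inverse $z(\zeta)$. If this inverse failed to continue analytically over some point $\zeta_0$, the image of the segment from $0$ to $\zeta_0$ would be a path of $|F'(z)|\,|dz|$-length equal to $|\zeta_0|^{N+1}<\infty$, hence by the completeness hypothesis a non-divergent path, along which the inverse can be extended after all --- a contradiction. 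Therefore $z(\zeta)$ is defined on all of $\mathbb{C}$, so $\zeta(z)$ is an entire bijection, hence affine, forcing $F(z)=(z/A)^{N+1}$ and $G$ constant, i.e.\ $\phi$ polynomial. The completeness hypothesis is thus used globally, to rule out obstructions to analytic continuation, rather than locally to estimate a specific path; this is the idea your proposal is missing. (Your elementary ray argument for polynomial $Q$ of positive degree is correct but is subsumed by this uniform argument.)
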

\begin{proof}
If $\phi$ is a polynomial, for a large enough $R$, $\phi(z)>1$ for all $|z|>R$. Then the metric $e^w|dz|^2> |\phi|^{\frac{2}{k}}|dz|^2\geq |dz|^2$ for $|z|>\max\{R,R_1\}.$ Hence the metric $e^w|dz|^2$ is complete.

 Suppose the metric $e^w|dz|^2$ is complete. Then for every path $C$ which diverges to infinity, we have 
\begin{equation}\label{pathcomplete}\int_C|\phi(z)|^{\frac{1}{k}}|dz|=\infty.\end{equation}
Since $\phi(z)$ has finite zeros, it is of the form $P(z)e^{G(z)}$, where $P(z)$ is a polynomial and $G(z)$ is an entire function. Then there exists a positive integer $N$ such that $|\phi(z)|^{\frac{1}{k}}\leq O(|z^N e^{G(z)}|)$. Define the entire function $F(z)$ as\[F(z)=\int z^Ne^{G(z)}dz,\quad F(z)=0.\]
So $F(z)$ vanishes at $0$ of order $N+1$. There exists a single-value branch $\zeta(z)=[F(z)]^{\frac{1}{N+1}}$ in a neighborhood of $z=0$ satisfying $\zeta'(z)\neq 0$. Hence we have an inverse $z(\zeta)$ in a neighborhood of $\zeta=0$. 

We claim the inverse map can extend to the whole $\zeta$-plane. Suppose it is not the case, there would be a point $\zeta_0$ satisfying $|\zeta_0|\leq R$ over which $z(\zeta)$ cannot be extended. Consider the path $C$ as image of the segment from $0$ to $\zeta_0$ under the map $z(\zeta)$, 
\[\int_C|z^Ne^{G(z)}| |dz|=\int_C|F'(z)| |dz|=R^{N+1}.\] By Equation (\ref{pathcomplete}), the path $C$ does not diverge to infinity. Then along this path there exists a sequence $z_n\rightarrow z_0$ such that $\zeta(z_n)\rightarrow\zeta_0$. But since $F'(z_0)$ is nonzero, we can extend $z(\zeta)$ over $\zeta_0$. We thus have the inverse map $z(\zeta)$ is defined on the whole $\zeta$-plane.

Thus $\zeta(z)$ is entire and invertible, hence $\zeta(z)=Az$ for $z\neq 0$. So $F(z)=(z/A)^{N+1}$. And then $G(z)$ is a constant. Therefore $\phi$ is a polynomial.
\end{proof}
\begin{proof}(of Theorem \ref{NonUnique})
By Lemma \ref{PolynomialComplete}, the solution $w$ constructed in Lemma \ref{Another} defines a complete metric if and only if $\phi$ is polynomial. 
Then, if $\phi$ is not a polynomial, the solution constructed in Lemma \ref{Another} defines an incomplete metric $e^w|dz|^2$. By Theorem \ref{CompleteSolution}, there is also another solution $w_1$ such that $e^{w_1}|dz|^2$ is complete. Hence $w\neq w_1$. By Lemma \ref{weakunique}, $w<w_1$. 

It is clear that $|\phi|^2e^{-kw}\leq 1$ and by Proposition \ref{strictlynegative}, $|\phi|^2e^{-kw_1}<1$. \end{proof}

We also prove the following property of a solution to Equation (\ref{main}).
\begin{prop} For any nonzero entire function $\phi$ on $\mathbb{C}$, there is no solution $w$ to Equation (\ref{main}) satisfies $|\phi|^2e^{-kw}\leq \delta,$ for some positive constant $\delta<1$.
\end{prop}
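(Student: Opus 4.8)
The plan is to convert the pointwise bound $|\phi|^2 e^{-kw}\le\delta$ into a strict differential inequality with a positive coefficient, and then invoke the Cheng-Yau maximum principle (with the flat background metric) to force a contradiction. First I would factor the right-hand side of Equation (\ref{main}): on all of $\mathbb{C}$,
\[\triangle w = e^w - |\phi|^2 e^{-(k-1)w} = e^w\bigl(1 - |\phi|^2 e^{-kw}\bigr) \geq (1-\delta)\,e^w,\]
where the inequality is exactly the hypothesis $|\phi|^2 e^{-kw}\le\delta$. The essential point is that $\delta<1$ \emph{strictly}, so the constant $1-\delta$ is positive. Note this inequality also holds at the zeros of $\phi$, where $|\phi|^2e^{-kw}=0\le\delta$ trivially, so there is no regularity issue there.

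Next I would apply the Cheng-Yau maximum principle with $M=\mathbb{C}$ and background metric $h=|dz|^2$, which is complete with Ricci curvature $0$ (hence bounded below). Taking $f(t)=(1-\delta)e^t$ and the test function $g(t)=e^t$, the three hypotheses are immediate: $g$ is increasing; $\liminf_{t\to\infty} f(t)/g(t)=1-\delta>0$; and $\int_0^\infty\bigl(\int_0^t e^\tau d\tau\bigr)^{-1/2}dt=\int_0^\infty (e^t-1)^{-1/2}dt<\infty$. Since $f$ is continuous (a fortiori lower semi-continuous), Cheng-Yau yields both that $w$ is bounded from above and that $f(\sup w)\le 0$.

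The contradiction is then immediate: $f(t)=(1-\delta)e^t>0$ for every real $t$, so $f(\sup w)>0$, contradicting $f(\sup w)\le 0$. Hence no solution $w$ can satisfy $|\phi|^2 e^{-kw}\le\delta$ with $\delta<1$, which is the assertion. This mirrors the structure already used in the proof of Theorem \ref{Unique}, so it fits the paper's toolkit exactly.

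I do not expect a serious analytic obstacle here; the one thing to keep front and center is that the strictness $\delta<1$ is doing all the work. It is precisely what supplies the positive constant $1-\delta$, turning $\triangle w\ge(1-\delta)e^w$ into a genuine Keller-Osserman-type non-existence statement (realized concretely through Cheng-Yau). Were $\delta=1$, the argument would collapse to the vacuous $\triangle w\ge 0$, and such solutions certainly exist, since the complete solution produced in Theorem \ref{CompleteSolution} satisfies exactly $|\phi|^2 e^{-kw}\le 1$. Thus the whole content of the statement lies in exploiting the strict inequality, and verifying the Cheng-Yau hypotheses for $f(t)=(1-\delta)e^t$, $g(t)=e^t$ is routine.
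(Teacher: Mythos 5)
Your proof is correct, and it is genuinely shorter than the one in the paper. You use the full strength of the Cheng--Yau maximum principle as stated in the toolbox section -- namely the ``moreover'' clause giving $f(\sup w)\leq 0$ for lower semi-continuous $f$ -- applied to $f(t)=(1-\delta)e^t$, which is everywhere positive, so the contradiction is immediate. This is a clean Keller--Osserman-type non-existence argument, and your verification of hypotheses (i)--(iii) with $g(t)=e^t$, including the convergence of $\int_0^\infty(e^t-1)^{-1/2}\,dt$, is fine. The paper instead extracts only the weaker conclusion that $w$ is bounded above, then observes $|\phi|^2\leq\delta e^{kw}$ forces $\phi$ to be bounded, hence constant by Liouville; it then disposes of the constant case $\phi\equiv c\neq 0$ by a second Cheng--Yau application to $-w$ (to get completeness of $e^w|dz|^2$) together with the uniqueness of complete solutions from Theorem \ref{CompleteSolution}, which forces $|c|^2e^{-kw}\equiv 1$, contradicting $\leq\delta<1$. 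Your route avoids both Liouville's theorem and the appeal to the uniqueness theorem, at the cost of leaning on the sharper $f(\sup w)\leq 0$ conclusion; the paper's route uses only boundedness from above but pays with a longer detour through the structure of the constant-coefficient case. Both are valid given the tools the paper has already set up.
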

\begin{proof}
Suppose there exists a solution $w$ such that $|\phi|^2e^{-kw}\leq \delta<1$, then the solution $w$ satisfying
\[\triangle w=e^w(1-|\phi|^2e^{-kw})\geq (1-\delta)e^w.\]
Applying the Cheng-Yau maximum principle, we obtain that $w$ is bounded from above. Therefore $|\phi|^2\leq \delta e^{kw}<\infty$. Hence $\phi$ is bounded. By Liouville theorem, $\phi$ is a constant function $c\neq 0$. But in this case we can show that $|c|^2e^{-kw}\equiv1$.
 Consider the equation for $-w$, \[\triangle (-w)=|c|^2e^{-(k-1)w}-e^w.\] Applying the Cheng-Yau maximum principle by choosing $f(t)=|c|^2e^{(k-1)t}-e^{-t}$ and $g(t)=e^{(k-1)t}$, we obtain that $-w$ is bounded from above. Therefore $e^w|dz|^2$ is complete. We also see the constant function $\frac{2}{k}{\log|c|^2}$ is a solution. Using the uniqueness of complete solutions in Lemma \ref{CompleteSolution}, we obtain that $|c|^2e^{-kw}$ is identically $1$, contradiction.
\end{proof}

\section{Harmonic maps}
We consider harmonic maps $u:\mathbb{C}\rightarrow (\mathbb{H}^2,h)$. Call $|\partial u|^2$ the $\partial$-energy density. Following Schoen-Yau \cite{SchoenYau}, when $\partial u\neq 0$, the following equation holds
\begin{equation}\label{harmonicprevious}\triangle \log |\partial u|^2=2(|\partial u|^2-|\overline{\partial}u|^2).\end{equation}
Writing $w=\frac{1}{2}\log |\partial u|^2$. Let $q$ be the quadratic differential defined by $q(z)=(u^*h)^{2,0}$, called the Hopf differential. The harmonicity of $u$ is equivalent to the Hopf differential $q$ being holomorphic and that Equation (\ref{harmonicprevious}) relates $(w,q)$ as follows
\begin{equation}\label{harmonic}\triangle w=e^{2w}-|q|^2e^{-2w}.\end{equation} The Jacobian $J$ of the harmonic map is $J=|\partial u|^2-|\overline{\partial}u|^2=e^{2w}-|\phi|^2e^{-2w}$. 

\begin{thm}\label{UniqueHarmonic} Given a nonconstant polynomial function $q$ on $\mathbb{C}$, there exists a unique harmonic map $u:\mathbb{C}\rightarrow \mathbb{H}^2$ satisfying $\partial u\neq 0$ with its Hopf differential $qdz^2$ up to an isometry of $\mathbb{H}^2$. Moreover, it is an orientation-preserving harmonic embedding.
\end{thm}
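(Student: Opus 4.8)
The plan is to pass between harmonic maps with $\partial u\neq 0$ and solutions of the vortex equation, so that the uniqueness of $w$ supplied by Theorem~\ref{mainmain1} can be converted into uniqueness of $u$. First I would record the correspondence set up at the start of Section~5: a harmonic map $u:\mathbb{C}\to\mathbb{H}^2$ with $\partial u\neq 0$ produces the pair $(w,q)$, where $w=\tfrac12\log|\partial u|^2$ and $q$ is holomorphic, solving Equation~(\ref{harmonic}), i.e. $\triangle w=e^{2w}-|q|^2e^{-2w}$. This is Equation~(\ref{main}) with $k=2$ after the rescaling in the Remark following Theorem~\ref{CompleteSolution}, so for a nonconstant polynomial $q$ the existence and uniqueness of the $C^\infty$ function $w$ is exactly Theorem~\ref{mainmain1} (equivalently Theorem~\ref{Unique}).

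For existence of $u$ from $(w,q)$, I would integrate the harmonic-map structure equations. The data $(w,q)$ assemble into a flat $\mathrm{SL}(2,\mathbb{R})$-connection (the Hitchin/self-duality connection), whose flatness is precisely Equation~(\ref{harmonic}) together with the holomorphicity of $q$. Since $\mathbb{C}$ is simply connected the monodromy is trivial, and integrating the connection yields a global frame and hence a harmonic map $u:\mathbb{C}\to\mathbb{H}^2$ realizing $(w,q)$, with $\partial u\neq 0$ and Hopf differential $q\,dz^2$ by construction; alternatively one may quote the existence half of \cite{HTTW}.

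For uniqueness up to isometry, let $u'$ be another harmonic map with $\partial u'\neq 0$ and Hopf differential $q\,dz^2$. Its $\partial$-energy density solves Equation~(\ref{harmonic}) as well, so by the uniqueness of $w$ it equals $e^{2w}$; hence $u$ and $u'$ induce the same pullback metric
\[
u^*h=q\,dz^2+(e^{2w}+|q|^2e^{-2w})\,|dz|^2+\bar q\,d\bar z^2=u'^*h .
\]
Because $q$ is nonconstant, Proposition~\ref{strictlynegative} gives $|q|^2e^{-4w}<1$, so the Jacobian $J=e^{2w}-|q|^2e^{-2w}>0$ and both $u,u'$ are orientation-preserving local diffeomorphisms, i.e. local isometries from $(\mathbb{C},u^*h)$, a simply connected surface of constant curvature $-1$, into $\mathbb{H}^2$. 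Uniqueness of the developing map then forces $u'=\gamma\circ u$ for a single $\gamma\in\mathrm{Isom}^+(\mathbb{H}^2)$, which is the uniqueness claim.

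The same computation already gives the orientation-preserving immersion property, and the remaining, and I expect hardest, point is to promote the local diffeomorphism $u$ to a global embedding. For this I would analyze the asymptotics of $w$: for polynomial $q$ the metric $e^{2w}|dz|^2$ is complete (Theorem~\ref{Unique}) and $e^{2w}$ is comparable to $|q|$ outside a large disk, which organizes $\mathbb{C}$ into $n+2$ sectors ($n=\deg q$) along the horizontal foliation of $q\,dz^2$, each sent toward a distinct ideal point. This is precisely the picture of \cite{HTTW,HAN} and identifies $u$ as a proper injective immersion onto the interior of an ideal polygon, hence an embedding. The main obstacle is thus the injectivity and properness at infinity, which rely on the fine asymptotic analysis of \cite{HTTW} rather than on the maximum-principle machinery used elsewhere in the paper.
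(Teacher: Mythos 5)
Your proposal is essentially correct, and it reduces the theorem to the uniqueness of $w$ (Theorem \ref{Unique}) and the positivity of the Jacobian in the same way the paper does; but you diverge from the paper's proof at three points, each time taking a legitimate alternative route. For existence, you integrate a flat $\mathrm{SL}(2,\mathbb{R})$ (Hitchin) connection, whereas the paper builds a spacelike constant mean curvature immersion $\mathbb{C}\to\mathbb{M}^{2,1}$ from the pair $(w,q)$ via Milnor's method and takes its Gauss map; both produce a harmonic map realizing $(w,q)$ on a simply connected domain. For uniqueness, after observing (as the paper does) that the two maps have the same pullback metric $q\,dz^2+(e^{2w}+|q|^2e^{-2w})|dz|^2+\bar q\,d\bar z^2$ and are orientation-preserving local diffeomorphisms, you invoke rigidity of local isometries from a connected constant-curvature surface into $\mathbb{H}^2$ (the developing-map argument), while the paper composes $u_1\circ u_2^{-1}$ on a small domain, uses analyticity of the hyperbolic metric to extend it to a global isometry $\tau$, and then applies Sampson's unique continuation to get $u_1=\tau\circ u_2$ everywhere; your version is arguably more elementary since it avoids unique continuation for harmonic maps. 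The real difference is the embedding claim: the paper gets it almost for free from completeness of the induced metric of the CMC immersion, which makes the immersion an entire spacelike graph, and then Choi--Treibergs (Theorem 4.8 of \cite{ChoiTreibergs}) says the Gauss map of an entire CMC graph is a harmonic diffeomorphism onto its image. You instead defer to the sector-by-sector asymptotic analysis of \cite{HTTW,HAN} showing the image is an ideal polygon; that citation is apt and does yield injectivity and properness, but as you acknowledge it is only sketched, and it imports substantially heavier machinery than the paper's argument, which needs no asymptotics of $w$ beyond completeness of $e^{2w}|dz|^2$. No step of your outline is wrong, but if you want a self-contained proof of the embedding statement you should either carry out the \cite{HTTW} asymptotics or switch to the Minkowski-space construction so that Choi--Treibergs applies.
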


\begin{proof}
The existence is standard as shown in \cite{Parabolic}. Given such a $q$, by Theorem \ref{Unique}, there exists a solution $w$ to Equation (\ref{harmonic}) such that $e^{2w}|dz|^2$ is complete. We then construct a harmonic map giving back such data through constructing space-like constant mean curvature immersions in the Minkowski 3-space. The Minkowski $3$-space $\mathbb{M}^{2,1}$ is  $\mathbb{R}^2\times \mathbb{R}^1$ endowed with the metric $ds^2=(dx^1)^2+(dx^2)^2-(dx^3)^2$. Applying Milnor's method \cite{Milnor}, we develop a constant mean curvature immersion from $\mathbb{C}$ to $\mathbb{M}^{2,1}$ using the pair $(w,q)$ as follows: define $h_{ij}$ such that 
\[h_{11}+h_{22}=e^{2w},\quad h_{11}-h_{22}=2\Re q, \quad h_{12}=h_{21}=-\Im q;\]
 and define the first and second fundamental form $I$ and $II$ as \[I=e^{2w}|dz|^2,\quad II=h_{ij}dx^idx^j.\] Then the form $I$ and $II$ satisfy the integrability condition of the Fundamental Theorem of Differential Geometry using the equation of $q$ and $w$. Since the domain $\mathbb{C}$ is simply connected, we can develop a constant mean curvature space-like immersion from $\mathbb{C}$ to $\mathbb{M}^{2,1}$. Then its Gauss map is a harmonic map from $\mathbb{C}$ to $\mathbb{H}^2$ with the Hopf differential is the given $q$ and the $\partial$-energy density is the given $e^{2w}|dz|^2$, hence $\partial u\neq 0$.

By \cite{WAN}, since the space-like immersion $f$ is complete with respect to the induced Riemannian metric $e^{2w}|dz|^2$, then $f$ is an entire graph, meaning that the natural projection $\Pi:f(\Omega)\rightarrow \mathbb{R}^2$ is onto. Because the immersion $f$ is entire, by Theorem 4.8 in \cite{ChoiTreibergs}, the Gauss map of the immersion is a harmonic diffeomorphism onto its image. Therefore there exists an orientation-preserving harmonic embedding with the prescribed Hopf differential $q$. 

Next, we show the uniqueness. From Theorem \ref{Unique}, the unique solution $w$ to Equation (\ref{harmonic}) satisfies $e^{-4w}|\phi|^2<1$. So the Jacobian $J(u)=e^{2w}-|\phi|^2e^{-2w}>0$. Hence any harmonic map $u:\mathbb{C}\rightarrow \mathbb{H}^2$ satisfying $\partial u\neq 0$  with a polynomial Hopf differential $q$ is an orientation-preserving local diffeomorphism.

Suppose there are two harmonic maps $u_1,u_2:\mathbb{C}\rightarrow \mathbb{H}^2$ satisfying $\partial u_i\neq 0$ inducing the same polynomial Hopf differential $q$. Then they are both orientation-preserving local diffeomorphisms. Therefore, there exist two domains $\Omega_1,\Omega_2\in \mathbb{H}^2$ such that $u_2^{-1}:\Omega_2\rightarrow \mathbb{C}$ is well-defined and $u_1\circ u_2^{-1}:\Omega_2\rightarrow \Omega_1$ is a diffeomorphism. By Theorem \ref{Unique}, the $\partial$-energy densities for $u_1,u_2$ are the same. Then the pullback metrics of both maps $u_1,u_2$ is the same, written as $\phi dz^2+(e^{2w}+|\phi|^2e^{-2w})|dz|^2+\bar\phi d\bar z^2$. Hence the map $u_1\circ u_2^{-1}:\Omega_2\rightarrow \Omega_1$ is an isometry. Since the hyperbolic metric on $\mathbb{H}^2$ is analytic, $u_1\circ u_2^{-1}$ is a restriction of an isometry $\tau$ of $\mathbb{H}^2$. Therefore $u_1$ and $\tau\circ u_2$ coincide on a domain $u_2(\Omega)$. Then by the unique continuation theorem of harmonic maps shown in \cite{Sampson}, $u_1$ and $\tau\circ u_2$ are identical. The uniqueness follows.\end{proof}

\begin{rem}
When $q$ is a polynomial of degree $k$, the image of the unique harmonic map is an ideal (k+2)-polygon in $\mathbb{H}^2$, as shown in \cite{HAN,HTTW}.
\end{rem}

Next, we study the case when $q$ is not a polynomial.

\begin{thm}
Given a non-polynomial entire function $q$ with finite zeros, there are at least two harmonic maps $u_1,u_2:\mathbb{C}\rightarrow \mathbb{H}^2$ satisfying $\partial u_i\neq 0$ with the given $qdz^2$ as their Hopf differential not related by an isometry of $\mathbb{H}^2$, where $u_1$ is an orientation-preserving harmonic embedding.
\end{thm}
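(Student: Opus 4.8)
The plan is to reduce everything to the non-uniqueness result for the vortex equation and then promote two distinct solutions of (\ref{harmonic}) to two non-isometric harmonic maps. Equation (\ref{harmonic}) is Equation (\ref{main}) in the case $k=2$ (with $\phi=2q$, after the constant rescaling allowed by the Remark following Theorem \ref{CompleteSolution}), exactly as was already used for the harmonic equation in the proof of Theorem \ref{UniqueHarmonic}. Hence Theorem \ref{NonUnique} applies directly: since $q$ is a non-polynomial entire function with finite zeros, (\ref{harmonic}) admits two solutions $w_1>w_2$ with $e^{2w_1}|dz|^2$ complete and $e^{2w_2}|dz|^2$ incomplete, and moreover $|q|^2e^{-4w_1}<1$ and $|q|^2e^{-4w_2}\le 1$. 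First I would record these two solutions and, crucially, the two one-sided inequalities, since the latter are what will drive the non-isometry argument.

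Next, for each $i$ I would rerun the Milnor-type construction from the proof of Theorem \ref{UniqueHarmonic}. Using the pair $(w_i,q)$ I define a first and second fundamental form on $\mathbb{C}$ whose Gauss--Codazzi (integrability) equations are precisely (\ref{harmonic}) together with the holomorphicity of $q$; since $\mathbb{C}$ is simply connected I develop a spacelike constant-mean-curvature immersion $\mathbb{C}\to\mathbb{M}^{2,1}$, and take its Gauss map $u_i:\mathbb{C}\to\mathbb{H}^2$. This produces, for \emph{every} solution $w_i$ regardless of completeness, a harmonic map with Hopf differential $q\,dz^2$ and $\partial$-energy density $e^{2w_i}|dz|^2$, so $\partial u_i\neq 0$. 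For $u_1$ the induced metric $e^{2w_1}|dz|^2$ is complete, so exactly as in Theorem \ref{UniqueHarmonic} the immersion is an entire graph and its Gauss map is an orientation-preserving harmonic embedding, the orientation preservation following from $J(u_1)=e^{2w_1}-|q|^2e^{-2w_1}>0$, which holds by $|q|^2e^{-4w_1}<1$. For $u_2$ I would claim nothing beyond its being a harmonic map with $\partial u_2\neq 0$.

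Finally I would show that $u_1$ and $u_2$ are not related by any isometry $\tau$ of $\mathbb{H}^2$. The key observation is that an isometry preserves both the full pullback metric $u^*h$ and the Hopf differential, hence preserves the unordered pair $\{|\partial u|^2,|\bar\partial u|^2\}$: its two entries have fixed sum (the $(1,1)$-part of $u^*h$) and fixed product $|q|^2$, so they are the roots of one and the same quadratic. For our maps $|\partial u_i|^2=e^{2w_i}$ and $|\bar\partial u_i|^2=|q|^2e^{-2w_i}$. Thus $u_1=\tau\circ u_2$ forces one of two alternatives: either $e^{2w_1}\equiv e^{2w_2}$ (when $\tau$ is orientation-preserving and the pairs match in order), which contradicts $w_1>w_2$; or $e^{2w_1}\equiv |q|^2e^{-2w_2}$ (when $\tau$ is orientation-reversing and the entries are swapped), i.e. $|q|^2\equiv e^{2w_1+2w_2}$, whence $|q|^2e^{-4w_2}\equiv e^{2(w_1-w_2)}>1$, contradicting $|q|^2e^{-4w_2}\le 1$. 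Either way we reach a contradiction, so no such $\tau$ exists.

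The main obstacle I anticipate is precisely the orientation-reversing case: an anti-holomorphic isometry does not fix $|\partial u|^2$ but interchanges it with $|\bar\partial u|^2$, so the naive comparison of $\partial$-energy densities does not immediately apply, and one genuinely needs the sharp one-sided bound $|q|^2e^{-4w_2}\le 1$ from Theorem \ref{NonUnique}, combined with the strict inequality $w_1>w_2$, to rule it out. The remaining ingredients---verifying the Gauss--Codazzi equations for $(w_i,q)$ and that the Gauss map realizes the prescribed Hopf differential and energy density---are routine repetitions of the corresponding steps in the proof of Theorem \ref{UniqueHarmonic}.
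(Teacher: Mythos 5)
Your proposal is correct and follows essentially the same route as the paper: apply Theorem \ref{NonUnique} to obtain two solutions $w_1>w_2$ of the vortex equation and run the Milnor/Gauss-map construction from the proof of Theorem \ref{UniqueHarmonic} on each pair $(w_i,q)$. In fact you go further than the paper's own (very terse) proof, which never explicitly rules out an isometry relating $u_1$ and $u_2$; your argument via the unordered pair $\{|\partial u|^2,|\bar\partial u|^2\}$ --- using $w_1>w_2$ against orientation-preserving isometries and $|q|^2e^{-4w_2}\le 1$ together with $w_1>w_2$ against orientation-reversing ones --- correctly supplies that missing step.
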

\begin{proof} Applying Theorem \ref{NonUnique}, we have two distinct solutions $w_1,w_2$ to Equation (\ref{harmonic}) for the same holomorphic quadratic differential $qdz^2$. Then by the argument in the proof of Theorem \ref{UniqueHarmonic}, both pairs $(w_1,q)$ and $(w_2,q)$ develop parabolic constant mean curvature immersions from $\mathbb{C}$ into $\mathbb{M}^{2,1}$ whose Gauss maps are harmonic maps $u_1, u_2$ from $\mathbb{C}$ to $\mathbb{H}^2$ giving back these two given pairs $(w_1,q),(w_2,q)$ respectively. As in the proof in Theorem \ref{UniqueHarmonic}, the harmonic map $u_1$ is an orientation-preserving harmonic embedding. 
\end{proof}

\section{Hyperbolic affine spherical immersion}
For a non-compact simply connected $2$-manifold $M$, consider a locally strictly convex immersed hypersurface $f:M\rightarrow \mathbb{R}^3$. Affine differential geometry associates to such a locally convex hypersurface a transversal vector field $\xi$, called the affine normal. Being an affine spherical immersion means the affine normal meets at a point (the center). By applying a translation, we can move the center of the affine sphere to the origin and write $\xi(p)=-Hp$, for all $p\in f(M)\subset \mathbb{R}^3$ for some constant $H\in \mathbb{R}$, the affine curvature. In the case when $H$ is negative, call the affine spherical immersion hyperbolic. After renormalization, we obtain a hyperbolic affine spherical immersion with center $0$ and of affine curvature $-1$.

 Decomposing the standard connection $D$ of $\mathbb{R}^3$ into tangent direction of $f(M)$ and affine normal components:
\[D_XY=\nabla_XY+h(X,Y)\xi, \quad \forall X,Y\in T_{f(p)}f(M).\] 
The second fundamental form $h$ of the image $f(M)$ relative to the affine normal $\xi$ can define a Riemannian metric $h$ on $M$, the Blaschke metric. This induces a complex structure on $M$. Also, the decomposition defines an induced connection $\nabla$ on $TM$. Let $\nabla^h$ be the Levi-Civita connection of the Blaschke metric $h$ and the Pick form $A(X,Y,Z)=h((\nabla-\nabla^h)_XY,Z)$ is a $3$-tensor, which uniquely determines a cubic differential $U=U(z)dz^3$ such that $\Re U=A$, the Pick differential. We focus on the case where $(M,h)$ is conformal to $(\mathbb{C},|dz|^2)$. In this case, we can reparametrize the hyperbolic affine spherical immersion $f:\mathbb{C}\rightarrow \mathbb{R}^3$ with the Blaschke metric is $e^w|dz|^2$.

Write the Blaschke metric $h=e^w|dz|^2$ and $U=U(z)dz^3$. Denoting $f_z, f_{\bar z}$ for $f_*(\frac{\partial}{\partial z}), f_*(\frac{\partial}{\partial \bar z})$ respectively.  Following Wang \cite{Wang} and Simon-Wang \cite{SimonWang}, the immersion $f$ being an affine spherical immersion 
implies that the frame field $(f, f_z,f_{\overline{z}})$ satisfying a linear first-order system of PDEs
\begin{eqnarray*}\label{system}
\frac{\partial}{\partial z}\begin{pmatrix}f\\f_z\\f_{\bar z}\end{pmatrix}=\begin{pmatrix}0&1&0\\0&w_z&Ue^{-w}\\ \frac{1}{2}e^w&0&0\end{pmatrix}\begin{pmatrix}f\\f_z\\f_{\bar z}\end{pmatrix},\\
\frac{\partial}{\partial \bar z}\begin{pmatrix}f\\f_z\\f_{\bar z}\end{pmatrix}=\begin{pmatrix}0&0&1\\\frac{1}{2}e^w&0&0\\ 0&\bar Ue^{-w}&w_{\bar z}\end{pmatrix}\begin{pmatrix}f\\f_z\\f_{\bar z}.\end{pmatrix}\end{eqnarray*}

Hence, the integrability for the above system (i.e. $f_{z\bar z}=f_{\bar z z}$)
 is equivalent to $U$ is holomorphic and 
\begin{equation}\label{WangEquation}
\triangle w=2e^w-4|U|^2e^{-2w}.\end{equation}
The curvature of the Blaschke metric $h=e^w|dz|^2$ is \[k_h=-\frac{1}{2}\triangle_h w=-1+2|U|^2e^{-3w}.\]

Conversely, given a function $w:\mathbb{C} \rightarrow \mathbb{R}$ and a holomorphic cubic differential $U=U(z)dz^3$ satisfying Equation (\ref{main}), we can integrate the linear first-order system (\ref{system}) and develop an affine spherical immersion $f:\mathbb{C}\rightarrow \mathbb{R}^3$  since the domain $\mathbb{C}$ is simply connected. And $f$ has Pick differential $U=U(z)dz^3$ and Blaschke metric $e^w|dz|^2$. 

In conclusion, any pair $(w,U)$ satisfying $U$ is holomorphic and Equation (\ref{WangEquation})
 is in one-to-one correspondence with an affine spherical immersion $f:\mathbb{C}\rightarrow \mathbb{R}^3$ with the Blaschke metric $e^w|dz|^2$ and the Pick differential $Udz^3$ up to a projective transformation of $\mathbb{R}^3$.

\begin{prop}\label{CompleteEmbedding}(\cite{Li90,Li92}) If the Blaschke metric of an affine spherical immersion $f:\mathbb{C}\rightarrow \mathbb{R}^3$ is complete, then $f$ is a proper embedding, and its image is asymptotic to the boundary of an open convex set.
\end{prop}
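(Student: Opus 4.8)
The plan is to identify the image of $f$ with a radial graph over a convex cone, using the structure theory of complete hyperbolic affine spheres of Cheng--Yau and Li. Recall that $f$ has center at the origin and affine curvature $-1$, so the position vector is (a constant multiple of) the affine normal $\xi$; consequently the linear structure equations control the extrinsic geometry of $f(\mathbb{C})$ directly in terms of $(w,U)$, and the Blaschke metric $e^{w}|dz|^{2}$ is simultaneously the affine metric of the dual (conormal) immersion. By Theorem \ref{CompleteSolution} the complete solution $w$ for the given $U$ is unique, so $f$ is, up to a projective transformation, the unique complete affine sphere with Pick differential $Udz^{3}$; it therefore suffices to establish properness, embeddedness, and the asymptotic behavior for this model.

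First I would pass to a convex potential: by local strict convexity the surface is locally a convex graph $x_{3}=F(x_{1},x_{2})$ over its tangent plane, and the affine-sphere equation of curvature $-1$ becomes, after a standard normalization, the Monge--Ampère equation $\det(D^{2}F)=(-1/F)^{4}$ on the projected convex domain $\Omega$, with $F<0$ inside and $F\to 0$ at $\partial\Omega$. Completeness of the Blaschke metric is exactly what forces $\Omega$ to be a bounded convex domain with this boundary behavior. I would then invoke the Cheng--Yau solution of Calabi's conjecture: for each bounded convex $\Omega$ there is a unique such convex $F$, and the associated affine sphere is precisely the radial graph filling the open cone $C(\Omega)=\{t(x,1):x\in\Omega,\ t>0\}$. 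Matching this model with $f$, embeddedness is immediate because each ray of $C(\Omega)$ meets the surface in a single point, and properness follows because a sequence leaving every compact subset of $\mathbb{C}$ has unbounded Blaschke length and hence leaves every compact subset of $C(\Omega)$; finally, as the radial height blows up toward $\partial\Omega$, the image is asymptotic to $\partial C(\Omega)$, the boundary of the open convex set $C(\Omega)$.

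The hard part will be the transition from the purely local convexity to the global statement that $\Omega$ is bounded and convex, since this is not a local consequence of the structure equations. The natural way to control it is through the conormal (Legendre-dual) map $\nu:\mathbb{C}\to\mathbb{R}^{3}$, which is itself a hyperbolic affine sphere carrying the same complete Blaschke metric; completeness of this dual immersion, combined with the interior and boundary a priori estimates for the Monge--Ampère equation, confines $\nu(\mathbb{C})$ to a bounded convex body and thereby pins down $\Omega$. This estimate-driven step is the technical core and is precisely what is carried out in \cite{Li90,Li92}, building on \cite{ChengYau} and \cite{Calabi}; I would cite those a priori estimates rather than reprove them.
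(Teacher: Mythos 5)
The paper gives no proof of this proposition: it is quoted directly from \cite{Li90,Li92}, so there is no internal argument to compare yours against. Your sketch is a faithful outline of the standard Cheng--Yau/Calabi/Li structure theory (radial graph over a convex cone, the Monge--Amp\`ere equation $\det D^2F=(-1/F)^{4}$ with $F\to 0$ on $\partial\Omega$, the conormal/dual immersion), and you correctly isolate and defer the genuinely hard step --- that completeness of the Blaschke (affine) metric, rather than Euclidean completeness, forces the picture to close up over a bounded convex $\Omega$ --- to exactly the references the paper cites. In that sense your proposal is essentially an expanded form of the paper's citation and is acceptable as such. Two caveats. First, your phrase that Blaschke completeness ``is exactly what forces $\Omega$ to be a bounded convex domain'' reads as if it were a consequence of the setup; it is in fact the entire content of Li's theorem (Cheng--Yau assumed Euclidean completeness, and the passage from affine to Euclidean completeness is the delicate point), so be careful not to present it as automatic before you later flag it as the technical core. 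Second, the appeal to Theorem \ref{CompleteSolution} and the uniqueness of the complete solution $w$ for a given $U$ is a non sequitur here: the proposition concerns an arbitrary given immersion with complete Blaschke metric and makes no uniqueness claim, so reducing to a ``unique model'' adds nothing and quietly presupposes part of what Section 3 proves; the structure theory applies to $f$ directly without it.
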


We then drop the condition on the completeness of the Blaschke metric.

\begin{thm}\label{embedding} 
 If $U$ is a polynomial function, there is a unique affine spherical immersion $f:\mathbb{C}\rightarrow \mathbb{R}^3$ with its Pick differential as $Udz^3$ up to a projective transformation of $\mathbb{R}^3$. Moreover, it is complete, properly embedded and its Blaschke metric is negatively curved.
\end{thm}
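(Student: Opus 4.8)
The plan is to reduce the statement to the machinery already assembled in the paper. The key observation is that by the one-to-one correspondence established just above Proposition \ref{CompleteEmbedding}, affine spherical immersions $f:\mathbb{C}\rightarrow\mathbb{R}^3$ with Pick differential $Udz^3$ correspond exactly to solutions $w$ of Wang's equation \eqref{WangEquation}, namely $\triangle w=2e^w-4|U|^2e^{-2w}$, up to a projective transformation of $\mathbb{R}^3$. So the uniqueness of the immersion is precisely the uniqueness of the solution $w$. First I would note that Equation \eqref{WangEquation} is an instance of Equation \eqref{main} with $k=3$ after the constant substitution described in the Remark following Theorem \ref{CompleteSolution} (here $c=1$, $d=2$, absorbing the factors $2$ and $4$). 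Thus Theorem \ref{Unique}, applied with $\phi=U$ a polynomial, gives a \emph{unique} solution $w$, and hence a unique affine spherical immersion up to projective transformation.

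Next I would establish the geometric conclusions, in the order: negative curvature, then completeness, then proper embeddedness. For the curvature, recall the formula stated in the excerpt, $k_h=-1+2|U|^2e^{-3w}$, which in the normalization of \eqref{main} is exactly $-1+|\phi|^2e^{-kw}$ with $k=3$. Since $U$ is a nonconstant polynomial, Proposition \ref{strictlynegative} applies to the unique complete solution and gives $|U|^2e^{-3w}<1$ strictly everywhere, whence $k_h<0$ on all of $\mathbb{C}$; so the Blaschke metric is negatively curved. (If $U$ is a nonzero constant the immersion is the standard elliptic paraboloid / homogeneous example and the curvature is identically $-1$, which I would mention as the degenerate case.)

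For completeness I would invoke the crucial fact proved inside Theorem \ref{Unique}: when $\phi=U$ is a polynomial, the unique solution $w$ is bounded below, and indeed $e^w|dz|^2$ is complete — this is exactly what the Cheng-Yau argument in the proof of Theorem \ref{Unique} delivers. Thus the Blaschke metric $h=e^w|dz|^2$ is complete. Finally, with completeness in hand, Proposition \ref{CompleteEmbedding} (the results of Li \cite{Li90,Li92}) immediately yields that $f$ is a proper embedding whose image is asymptotic to the boundary of an open convex set. Stringing these together gives all four assertions: existence, uniqueness up to projective transformation, completeness with negative Blaschke curvature, and proper embeddedness.

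The main obstacle, conceptually, is not in any single step but in correctly threading the normalization between Equation \eqref{main} and Wang's equation \eqref{WangEquation} so that the constants match and the cited results genuinely apply; once the substitution $w'=w+\log d$ with the right $d$ is pinned down, each geometric property is a direct citation. The only place demanding a little care is the strict inequality $|U|^2e^{-3w}<1$: it relies on $U$ being \emph{nonconstant} (Proposition \ref{strictlynegative}), so I would explicitly separate the constant case, where curvature is constantly $-1$ and the conclusion still holds, from the nonconstant case, where Proposition \ref{strictlynegative} gives strict negativity pointwise.
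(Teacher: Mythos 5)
Your proposal follows essentially the same route as the paper's own proof: reduce via the developing correspondence to uniqueness of solutions of Wang's equation (\ref{WangEquation}), invoke Theorem \ref{Unique} (whose Cheng--Yau argument supplies both uniqueness and the lower bound on $w$ that gives completeness), apply Proposition \ref{CompleteEmbedding} for proper embeddedness, and read off negative curvature from $k_h=-1+2|U|^2e^{-3w}$ together with the strict bound coming from Proposition \ref{strictlynegative}. Two remarks. First, a minor constant: after the normalization $w'=w+\log 2$, $\phi=4U$, the bound $|\phi|^2e^{-3w'}<1$ translates to $2|U|^2e^{-3w}<1$, not $|U|^2e^{-3w}<1$; harmless, but that factor of $2$ is exactly what the curvature formula requires, so it is worth writing correctly. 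Second, and more substantively, your parenthetical treatment of the constant case is wrong. If $U$ is a nonzero constant, the unique solution of (\ref{WangEquation}) is the constant with $e^{3w}=2|U|^2$, so $2|U|^2e^{-3w}\equiv 1$ and $k_h\equiv 0$: the corresponding affine sphere is the classical one asymptotic to a triangular cone, whose Blaschke metric is \emph{flat}, not of curvature $-1$, and the conclusion ``negatively curved'' actually fails there. (This is consistent with Proposition \ref{strictlynegative}, which explicitly excludes constant $\phi$.) The paper's proof silently assumes $U$ nonconstant when it asserts $2|U|^2e^{-3w}<1$; the correct way to handle the degenerate case is to exclude constant $U$ from the strict-negativity claim (or weaken it to $k_h\le 0$ there), not to assert that the curvature is identically $-1$ and that the conclusion still holds. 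Apart from this aside, your argument is correct and matches the paper's.
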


\begin{proof}
Applying Theorem \ref{Unique}, the polynomial $U$ uniquely determines the pair $(w,U)$ satisfying Equation (\ref{WangEquation}) Then by the discussion before Prop \ref{CompleteEmbedding}, there is a unique affine spherical immersion $f:\mathbb{C}\rightarrow \mathbb{R}^3$ with its Pick differential $Udz^3$ up to a projective transformation of $\mathbb{R}^3$. Moreover since the Blaschke metric $e^w|dz|^2$ is complete, by Proposition \ref{CompleteEmbedding}, $f$ is a proper embedding. The curvature of Blaschke metric is $k_h=-1+2|U|^2e^{-3w}$, by $2|U|^2e^{-3w}<1$, we obtain that the curvature of Blaschke metric $k_h<0$.
\end{proof}

Theorem \ref{embedding} gives a negative answer to the following question.
\begin{question}(\cite{DumasWolf}) Does there exist an affine spherical immersion $f:\mathbb{C}\rightarrow \mathbb{R}^3$ with polynomial Pick differential whose Blaschke metric has positive curvature at some point?
\end{question}
\begin{rem}
When $U$ is a polynomial of degree $k$, the image of the properly embedded affine sphere is asymptotic to a cone generated by a $(k+3)$-polygon in $\mathbb{R}^2$, as shown in \cite{DumasWolf}.
\end{rem}

Next, we study the case when $U$ is not a polynomial.
\begin{thm}
Given a non-polynomial entire function $U$ with finite zeros,  then there are at least two affine spherical immersions $f_1,f_2:\mathbb{C}\rightarrow \mathbb{R}^3$ with the given $Udz^3$ as their Pick differential, where \\
(1) $f_1$ is properly embedded. The Blaschke metric is complete and negatively curved; while\\
(2) $f_2$ is not properly embedded. The Blaschke metric is incomplete and nonpositively curved.
\end{thm}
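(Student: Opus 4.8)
The plan is to mirror the harmonic-map case, transferring the two solutions produced by Theorem \ref{NonUnique} into two projectively inequivalent affine spheres through the correspondence recorded just before Proposition \ref{CompleteEmbedding}. First I would apply Theorem \ref{NonUnique} to Wang's equation (\ref{WangEquation}). Although (\ref{WangEquation}) carries the constants $2$ and $4$ rather than the normalized shape of (\ref{main}), the rescaling remark following Theorem \ref{CompleteSolution} absorbs them: setting $v=w+\log 2$ turns (\ref{WangEquation}) into $\triangle v=e^v-16|U|^2e^{-2v}$, which is the case $k=3$ of (\ref{main}) with $\phi=4U$, and under this change of variable the invariant transforms as $|\phi|^2e^{-3v}=2|U|^2e^{-3w}$. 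Since $U$ is a non-polynomial entire function with finite zeros, Theorem \ref{NonUnique} yields two solutions $w_1>w_2$ of (\ref{WangEquation}) with $e^{w_1}|dz|^2$ complete and $e^{w_2}|dz|^2$ incomplete, and the ``moreover'' clause translates into
\[2|U|^2e^{-3w_1}<1,\qquad 2|U|^2e^{-3w_2}\leq 1.\]

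Next I would feed each pair $(w_i,U)$ into the linear system (\ref{system}). Because $\mathbb{C}$ is simply connected and each $w_i$ satisfies (\ref{WangEquation}) with $U$ holomorphic, the integrability condition holds, so integration develops an affine spherical immersion $f_i:\mathbb{C}\rightarrow\mathbb{R}^3$ with Pick differential $U\,dz^3$ and Blaschke metric $e^{w_i}|dz|^2$, determined up to a projective transformation. These two immersions are genuinely distinct: the Blaschke metric is a projective invariant, so $w_1\neq w_2$ forces $f_1$ and $f_2$ to be inequivalent under projective transformations, which already gives the required ``at least two''. For $f_1$ the argument is exactly that of Theorem \ref{embedding}: completeness of $e^{w_1}|dz|^2$ makes $f_1$ a proper embedding asymptotic to the boundary of a convex set by Proposition \ref{CompleteEmbedding}, and the curvature formula $k_h=-1+2|U|^2e^{-3w_1}$ together with the strict bound above gives $k_h<0$. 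For $f_2$ the curvature is immediately nonpositive, since $k_h=-1+2|U|^2e^{-3w_2}\leq 0$.

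The main obstacle is the remaining claim that $f_2$ is \emph{not} properly embedded, since Proposition \ref{CompleteEmbedding} supplies only the implication complete $\Rightarrow$ properly embedded and does not rule this out on its own. My plan is to invoke the converse, due to Cheng--Yau (see \cite{Li90,Li92}): for a hyperbolic affine sphere, Euclidean completeness of the image is equivalent to completeness of the Blaschke metric. If $f_2$ were properly embedded, its image would be a closed subset of $\mathbb{R}^3$, hence complete in the induced Euclidean metric, and the cited equivalence would then force the Blaschke metric $e^{w_2}|dz|^2$ to be complete --- contradicting the incompleteness guaranteed by Theorem \ref{NonUnique}. Therefore $f_2$ cannot be properly embedded. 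Collecting the statements for $f_1$ and $f_2$ completes the proof; the delicate point throughout is keeping the curvature normalizations consistent through the rescaling $v=w+\log 2$, which is what makes the two bounds $2|U|^2e^{-3w_1}<1$ and $2|U|^2e^{-3w_2}\leq 1$ line up precisely with the negatively and nonpositively curved conclusions.
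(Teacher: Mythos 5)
Your proposal is correct and follows essentially the same route as the paper: apply Theorem \ref{NonUnique} (after the rescaling $v=w+\log 2$, which you carry out correctly, including the translation of the invariant to $2|U|^2e^{-3w}$), develop the two immersions from the pairs $(w_i,U)$ via the system (\ref{system}), and read off proper embeddedness of $f_1$ from Proposition \ref{CompleteEmbedding} and the curvature signs from $k_h=-1+2|U|^2e^{-3w_i}$. In fact you go one step further than the paper's own proof, which never justifies that $f_2$ is \emph{not} properly embedded; your appeal to the Cheng--Yau/Li equivalence between Euclidean completeness of the image and completeness of the Blaschke metric for hyperbolic affine spheres is exactly the right way to close that gap.
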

\begin{proof}
Applying Theorem \ref{NonUnique}, we obtain two solutions $(w_1,w_2)$ to Equation where $e^{w_1}|dz|^2$ is complete while $e^{w_2}|dz|^2$ is incomplete. Then the two pairs $(w_1,Udz^3)$ and $(w_2,Udz^3)$ develop two affine spherical immersions $f_1,f_2: \mathbb{C}\rightarrow \mathbb{R}^3$ respectively. Since $e^{w_1}|dz|^2$ is complete, by Proposition \ref{CompleteEmbedding}, $f_1$ is a proper embedding.  By Theorem \ref{NonUnique}, $w_1$ satisfies $2|U|^2e^{-3w_1}<1$, implying the curvature $k_1=-1+2|U|^2e^{-3w_1}<0$; $w_2$ satisfies $2|U|^2e^{-3w_2}\leq1$, implying the curvature $k_2=-1+2|U|^2e^{-3w_2}\leq 0$. 
\end{proof}

\bibliographystyle{amsalpha}
\bibliography{bib}
\end{document}